\pdfoutput=1
\RequirePackage{ifpdf}
\ifpdf 
\documentclass[pdftex]{sigma}
\else
\documentclass{sigma}
\fi

\numberwithin{equation}{section}

\newtheorem{Theorem}{Theorem}[section]
\newtheorem{Corollary}[Theorem]{Corollary}
\newtheorem{Lemma}[Theorem]{Lemma}
\newtheorem{Proposition}[Theorem]{Proposition}
 { \theoremstyle{definition}
\newtheorem{Definition}[Theorem]{Definition}

\newtheorem{Example}[Theorem]{Example}
\newtheorem{Remark}[Theorem]{Remark}}

\newcommand{\N}{\mathbb N}
\newcommand{\Z}{\mathbb Z}
\newcommand{\F}{\mathbb F}

\newcommand{\heap}{\operatorname{Heap}}
\newcommand{\im}{\operatorname{Im}}
\newcommand{\id}{\mathrm{id}}
\def\nij#1{{\underset{#1}{\circ}}}

\begin{document}
\allowdisplaybreaks

\newcommand{\arXivNumber}{2303.12880}

\renewcommand{\PaperNumber}{056}

\FirstPageHeading

\ShortArticleName{Affine Nijenhuis Operators and Hochschild Cohomology of Trusses}

\ArticleName{Affine Nijenhuis Operators\\ and Hochschild Cohomology of Trusses}

\Author{Tomasz BRZEZI\'NSKI~$^{\rm ab}$ and James PAPWORTH~$^{\rm a}$}

\AuthorNameForHeading{T.~Brzezi\'nski and J.~Papworth}

\Address{$^{\rm a)}$~Department of Mathematics, Swansea University, Fabian Way, Swansea SA1 8EN, UK}
\EmailD{\href{mailto:t.brzezinski@swansea.ac.uk}{t.brzezinski@swansea.ac.uk}, \href{mailto:j.papworth.918550@swansea.ac.uk}{j.papworth.918550@swansea.ac.uk}}

\Address{$^{\rm b)}$~Faculty of Mathematics, University of Bia{\l}ystok, K.~Cio{\l}kowskiego~1M, \\
\hphantom{$^{\rm b)}$}~15-245 Bia{\l}ystok, Poland}

\ArticleDates{Received April 04, 2023, in final form July 27, 2023; Published online August 04, 2023}

\Abstract{The classical Hochschild cohomology theory of rings is extended to abelian heaps with distributing multiplication or trusses. This cohomology is then employed to give necessary and sufficient conditions for a Nijenhuis product on a truss (defined by the extension of the Nijenhuis product on an associative ring introduced by Cari\~nena, Grabowski and Marmo in [\textit{Internat.~J.~Modern Phys.~A} \textbf{15} (2000), 4797--4810, arXiv:math-ph/0610011]) to be associative. The definition of Nijenhuis product and operators on trusses is then linearised to the case of affine spaces with compatible associative multiplications or associative {\em affgebras}. It is shown that this construction leads to compatible Lie brackets on an affine space.}

\Keywords{Nijenhuis operator; Hochschild cohomology; truss; heap; affine space}

\Classification{20N10; 16E40; 81R12}

\section{Introduction}

As shown by Magri \cite{Mag:sim} two compatible Poisson structures are closely related to integrability of classical Hamiltonian systems. Extending this idea to quantum mechanics, Cari\~nena, Grabowski and Marmo \cite{4} proposed a way of deforming a given product on an algebra (of operators on a~Hilbert space) so that two compatible Lie algebra structures are obtained. This deformation involves an operator $N$ acting on an associative algebra $A$ and satisfying the following simple equation:
\begin{gather}\label{int.Nij}
 N(a)N(b) = N(N(a)b-N(ab)+aN(b)) \qquad \mbox{for all}\ a,b\in A.
\end{gather}
Borrowing terminology from differential geometry and Lie algebra theory, $N$ is called a {\em Nijenhuis tensor} in \cite{4}. The combination of signs on the right-hand side of \eqref{int.Nij} indicates immediately that the Nijenhuis condition has an {\em affine} rather than {\em linear} flavour. The aim of this paper is to demonstrate how one may extend Nijenhuis tensors or operators to affine spaces with compatible associative multiplications. This in turn might allow one to develop the gauge or frame-independent theory of quantum bi-Hamiltonian systems in the spirit of \cite{Ben:fib, MasVig:non, Tul:fra, Urb:aff, Wei:uni}.

Affine spaces admit a natural ternary operation and thus can be interpreted as \textit{heaps} (see Definition~\ref{heap}). Bi-affine multiplication on an affine space distributes over the ternary heap operation, and so just as any associative algebra is a ring, an associative `affgebra' is a \textit{truss} (see Definition~\ref{truss}). Thus we study deformations of products on trusses through heap operators and the resulting Nijenhuis conditions before showing how we may think of affine Nijenhuis operators on an affine space with a compatible associative multiplication.

The condition \eqref{int.Nij} is sufficient but not necessary for the associativity of the deformed product. In view of the classical results of Gerstenhaber on deformations of rings \cite{7} it is not entirely surprising that, as the authors of \cite{4} observe, the associativity of the deformed product is fully controlled by two-cocycles in the Hochschild cohomology of $A$ with coefficients in $A$ \cite{Hoch}. As the same can be expected of the deformed products of trusses, and indeed it is the case, this leads us to developing rudiments of the Hochschild cohomology for trusses. The main difficulty here is that the category of trusses is not enriched over the category of abelian groups but over the category of abelian heaps. The latter has no zero object and thus the usual methods of homological algebra cannot be applied. To overcome this difficulty, we take
any element $e$ of a~truss~$T$, retract the heap underlying $T$ to an abelian group and build a cochain complex of bi-heap homomorphisms $T^n\to T$ in that way. Extra care should be taken due to the facts that first it is not guaranteed that $e$ `behaves like the zero', i.e., the product $ae$ is not necessarily $e$, second that $e$ might not be preserved by the cochains (multi-heap homomorphisms), and third that we would like the coboundary operators to preserve constant functions with value~$e$, so that they are homomorphisms of corresponding abelian groups (retracts at the constant cochains with value~$e$). The construction of the \textit{$e$-relative Hochschild cochain complex} is achieved in Definition~\ref{def.cob} and Theorem~\ref{deltathme}. The corresponding cohomology is defined in Definitions~\ref{def.cob.coc} and~\ref{quotient.def} and it is shown to be independent from the choice of base elements (up to isomorphism) in Theorem~\ref{thm.Hoch}.

After the reworking of Hochschild cohomology we proceed to define a \textit{Nijenhuis product} on a~truss as a deformation of the original multiplication by a heap endomorphism $N$ combined with the ternary heap operation (see Definition~\ref{def.Nij.tor}). This mimics the construction in \cite{4} (and, of course, reduces to it in the additive case). We show in Theorem~\ref{2nd} that the Nijenhuis product on $T$ given by $N$ is associative if and only if, for all $e\in T$ (equivalently, for any $e\in T$), the \textit{$e$-Nijenhuis torsion} of $N$ introduced in Definition~\ref{def.Nij.tor} is a 2-cocycle in the $e$-relative Hoschschild cohomology of $T$. The operator $N$ that is a homomorphism between the truss with the new (deformed) and original products is termed a \textit{Nijenhuis operator}. This is equivalent to the triviality of its $e$-torsion for any $e\in T$.

The remainder of Section~\ref{section4} focuses on examples and properties of Nijenhuis operators.
In particular, in Proposition~\ref{prop.Nij.z} we classify all Nijenhuis operators on commutative trusses built on the abelian group of integers. Then we develop iterative procedure to construct Nijenhuis operators in Theorem~\ref{thm.Nij.iter} and study the compatibility of Nijenhuis operators, showing that powers of Nijenhuis operators are pairwise-compatible (see Definition~\ref{def.compat} and Theorem~\ref{thm.comp}).

Section~\ref{five} deals with the extension of results found in \cite{4} from linear maps to affine maps, creating an affine version of quantum bi-Hamiltonian systems and \textit{affine Nijenhuis operators}.
In particular we discusses the compatibility conditions and examples of affine Nijenhuis operators, showing in particular that barycentric combinations of affine Nijenhuis operators on an associative algebra or, more generally, affgebra form Nijenhuis operators (see Theorem~\ref{thm.affine} and Proposition~\ref{prop.affine}). The paper ends with Theorem~\ref{thm.affine.Lie}, which may be interpreted as an affine version of a (weak) quantum bi-Hamiltonian system, since it shows how an affine Nijenhuis operator induces a Lie bracket on an affine space compatible with the Lie bracket given by the commutators. This is an affine version of \cite[Theorem~8]{4}.

\section{Preliminaries on heaps and trusses}
The following will get the reader up to speed on prerequisite knowledge of both heaps \cite{5, Brz:par, 2, 6} and trusses \cite{3, Brz:par} for the later sections of this paper.
\begin{Definition}\label{heap}
A \textit{heap} is an algebraic structure $(H, [---])$, where $H$ is a set and $[---]$ is a ternary operation
\[[---]\colon \ H^{3} \rightarrow H, \qquad (a,b,c) \mapsto [a,b,c],\]
such that for all $a,b,c,d,e \in H$ we have the following properties:
\begin{align*}
& \text{associativity:} \quad [[a,b,c],d,e] = [a,b,[c,d,e]],\\
& \text{Mal'cev identities:} \quad [a,a,b] = b = [b,a,a].
\end{align*}

Furthermore, a heap is said to be \textit{abelian} if, for all $a,b,c \in H$,
\[[a,b,c]=[c,b,a].\]
\end{Definition}
\begin{Remark}\label{rem.rules}
In an abelian heap $(H,[---])$, the placement of brackets in multiple application of the heap operation does not play any role, hence we will write $[a_1,a_2,\dots ,a_{2n+1}]$ for any such multiple application. However, the parity of the position of an element does matter, any element within an even or odd position in the operation may exchange position with any other element in a respectively even or odd position. Moreover, if after such a parity preserving rearrangement two adjacent elements are equal to each other, we may perform a cancellation of these elements. For example,
\[[a,x,b,c,x] = [a,c,b,x,x] = [a,c,b] \]
for any $a,b,c,x \in H$.
\end{Remark}
\begin{Remark}\label{rem.equal}
The axioms of a heap $H$ imply in particular that any three elements in the expression $[a,b,c]=d$ determine the fourth one. In particular, $[a,b,c]=d$ if and only if ${[c,d,a]=b}$. Furthermore, $a=b$ if and only if $[a,b,c]=c$ if and only if $[c,a,b]= c$, for all, equivalently any~$c\in H$. The transition from any to all is clear from the chain of arguments $[a,b,c]=c$ if and only if $[[a,b,c],c,d]=[c,c,d]$, if and only if $[a,b,d]=d$ by the heap associativity and Mal'cev identities.
\end{Remark}
\begin{Definition}
A \textit{heap homomorphism} or a \textit{heap map} from $(H,[---])$ to $(H',[---])$ is a~mapping $f\colon H \rightarrow H'$ that preserves the ternary operation, that is, for all $a,b,c \in H$, \[f([a,b,c])=[f(a),f(b),f(c)].\]
The set of all heap homomorphisms from $(H,[---])$ to $(H',[---])$ is denoted by $\heap(H,H')$.
\end{Definition}
The set $\heap(H,H')$ includes in particular all constant functions.
In case $H$, $H'$ are abelian, $\heap(H,H')$ is a heap by a pointwise operation, $[f,g,h](a) = [f(a),g(a),h(a)]$. Any singleton set is an (abelian) heap with a trivial (only possible) operation. The unique function from any heap to the singleton heap is a heap homomorphism. This makes the singleton set a terminal object in the category of heaps; we denote it by $\ast$.

Similarly, the empty set is an abelian heap, the initial object in the category of heaps. To maintain the correspondence between heaps and groups described in the following remark and Definition~\ref{ret} we assume throughout that the discussed heaps are {\em non-empty}.
\begin{Remark}
Given a group $(G,\cdot,1)$, we define a ternary operation
\[ [---]\colon\ G^3 \rightarrow G, \qquad [x,y,z] = x \cdot y^{-1} \cdot z.\]
 Then $(G,[---])$ is a heap denoted by $H(G)$. If $(G,\cdot,1)$ is abelian, then so is $H(G)$.
\end{Remark}
\begin{Definition} \label{ret}
Let $(H,[---])$ be a heap and $e \in H$. Define the following binary operation on~$H$:
\[\cdot_e \colon\ H \times H \rightarrow H, \qquad x \cdot_e y = [x,e,y]. \]
 Then $G(H;e)$ is a group known as the \textit{retract} of $H$. Finally, note that if $(H,[---])$ is abelian, then so is $G(H;e)$.
\end{Definition}
\begin{Definition}
Let $H$ be a heap. For all $e,e'\in H$, the \textit{translation isomorphism} $\tau^{e}_{e'}\colon H\to H$ is defined as
\[
\tau^{e}_{e'}(a) : = [a,e',e].
\]
\end{Definition}
One easily checks that the inverse of $\tau^{e}_{e'}$ is given by $\tau^{e'}_{e}$. Furthermore, $\tau^{e'}_{e}$ is an isomorphism of groups $G(H;e)\to G(H;e')$. The set of all translation isomorphisms of $H$ is a group with respect to the composition. This group is isomorphic to any of the retracts of $H$.

A (non-empty) subset $K$ of a heap $(H,[---])$ is called a \textit{sub-heap} if it is closed under the heap operation, that is, for all $a,b,c\in K$, $[a,b,c]\in K$. A sub-heap defines an equivalence relation on $H\colon a\sim b$ if and only if, for all (equivalently, any) $x\in K$, $[a,b,x]\in K$. The set of equivalence classes is denoted by $H/K$. One easily shows that this is the same as the heap associated to the quotient of retracts, that is, $H/K = H(G(H;e)/G(K;e))$, for any $e\in K$. If~$H$ is abelian, then $H/K$ is an abelian heap with the inherited structure $[\bar{a},\bar{b},\bar{c}] = \overline{[a,b,c]}$, where $\bar{a}$ etc.\ denotes the class of $a$ in $H/K$.

\begin{Definition}\label{truss}
A \textit{truss} is an algebraic structure $(T,[---],\cdot)$, consisting of a set $T$, a ternary operation $[---]$ such that $(T,[---])$ forms an abelian heap, and an associative binary operation $\cdot$ which distributes over $[---]$, that is, for all $a,b,c,d \in T$,
\[
 a[b,c,d] = [ab,ac,ad] \qquad \text{and} \qquad
 [b,c,d]a=[ba,ca,da].
\]

If the multiplication $\cdot$ admits identity, then the truss is said to be \textit{unital}. Finally, if the operation $\cdot$ is commutative, we then refer to the truss as being a \textit{commutative truss}.
\end{Definition}
\begin{Remark} We may use the definition of a retract (see Definition~\ref{ret}) as an alternative way of looking at the heap operation and the truss distributive laws in the retract. In $G(T;e)$, we have
\begin{align*}
& a(b+c) = a[b,e,c] = [ab,ae,ac] = ab-ae+ac, \qquad (a+b)c = ac-ec+bc,\\
& a(b-c) = a[b,c,e] = [ab,ac,ae] = ab-ac+ae, \qquad (a-b)c = ac-bc+ec, \\
& a(-b) = a(e-b)= ae-ab+ae, \qquad (-a)b = eb-ab+eb.
 \end{align*}
There are often times where thinking of the heap operation as a combination of the binary operations $+$ and $-$ will prove useful, and these distributive laws will be used throughout this paper.
\end{Remark}

\begin{Remark}\label{rem.trusses} The world of trusses is substantially richer than that of rings. On any abelian group $A$ understood as a heap $H(A)$ there are at least four non-isomorphic truss multiplications, only one of which gives rise to a ring in all circumstances. These are
\begin{gather}\label{truss.ab}
ab = 0, \qquad ab=a, \qquad ab=b, \qquad ab=a+b= [a,0,b]
\end{gather}
for all $a,b\in A$. There are additional truss structures on specific groups. For example, commutative truss multiplications $\cdot$ on $\Z$ are given in terms of the usual arithmetic operations on $\Z$, for all $m,n\in \Z$, by
\begin{gather}\label{truss.z}
 m\cdot n =amn +b(m+n) +c,
\end{gather}
where $a,b,c\in \Z$ are such that
\begin{gather}\label{truss.z.constraint}
 ac=b(b-1);
\end{gather}
see \cite[Theorem~3.51]{Brz:par}. We denote these trusses by $T(\Z;a,b,c)$. These split into isomorphism classes:
\begin{itemize}\itemsep=0pt
 \item[(1)] for all $a\in \N$,
\begin{align*}
m\cdot n = amn,\qquad
m\cdot n = amn + m +n,
\end{align*}
\item[(2)] for all $a\in \Z_+$, $b\in \{2,3,\dots,a-1\}$ and $c\in \Z_+$ such that $ac = b(b-1)$,
\begin{align*}
&m\cdot n = amn +b(m+n) +c,
\end{align*}
\end{itemize}
see \cite[Corollary~3.53]{Brz:par}. As explained in \cite[Example~7.4]{AndBrzRyb:ext}, there are 23 non-isomorphic truss structures on the group $\Z_p\oplus \Z_p$ ($p$ a prime number) as opposed to 8 ring structures.
\end{Remark}

\begin{Remark}\label{rem.ideal}
 As explained in \cite{AndBrzRyb:ext}, there is a close relation between trusses and ring extensions. More precisely, let $R$ be an associative ring, let $I$ be an ideal in $R$ and let $q\in R$ be an idempotent element. Then
$ T(I;q) := q +I$
 is a truss with the heap operation $[a,b,c]=a-b+c$ and the same multiplication as in $R$. Any truss can be embedded in an associative ring in this way.
 \end{Remark}

\section{Hochschild cohomology of trusses}
The aim of this section is to make a proposal for the Hochschild cohomology of trusses.
\begin{Definition}\label{def.cob}
Let $T$ be a truss and let $C^0(T) = T$ and, for all positive integers $n$, let $C^n(T)$ be the set of all multi-heap functions $T^n\to T$ (i.e., heap morphisms in each argument). For all $n \in \N$, $C^n(T)$ are viewed as heaps with the operation defined pointwise, that is inherited from~$T$.

For all $n \in \N$ and $e\in T$, the heap homomorphism $\delta_e^n\colon C^n(T)\to C^{n+1}(T)$ is defined by
\begin{align*}
 \delta^n_e f(a_0,\dots,a_n) =
 \begin{cases}
 [e, a_0 e, a_0 f(a_1,\dots,a_n), f(a_0a_1, a_2, \dots, a_n), \dots,& \\
 \quad f(a_0,a_1, a_2, \dots, a_{n-1}a_n), f(a_0,\dots,a_{n-1})a_n, ea_n], \quad & n \ \text{even},\\
 [e, a_0 e, a_0 f(a_1,\dots,a_n), f(a_0a_1, a_2, \dots, a_n), \dots, & \\
 \quad f(a_0,a_1, a_2, \dots, a_{n-1}a_n), f(a_0,\dots,a_{n-1})a_n, ea_n, e], \quad &n \ \text{odd} \end{cases}
\end{align*}
for all $f\in C^n(T)$ and $a_0, \dots ,a_n \in T$. We call this the \textit{e-relative Hochschild $n$-coboundary operator} on $T$.
\end{Definition}
We note in passing that the maps $\delta_e^n$ are heap homomorphisms by the truss distributive law and the fact that $T$ is an abelian heap. The usage of the term {\em coboundary} is justified by the following theorem.

\begin{Theorem} \label{deltathme}
For all $n\in \N$ and $e\in T$,
$\delta^{n+1}_e \circ \delta^{n}_e = e$,
where in each case the constant function with value $e$ is denoted by $e$ as well.

Furthermore, for all $n$, $\delta_e^n(e) = e$, and hence each $\delta^n_e$ is a~homomorphism of abelian groups $\delta_e^n\colon G(C^n(T);e) \to G\big(C^{n+1}(T);e\big)$.
\end{Theorem}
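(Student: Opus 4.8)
The plan is to transport the whole statement into the retract $G(T;e)$ (Definition~\ref{ret}), where the ternary operation becomes the group operation and the extra base-point entries acquire a transparent meaning. Throughout I write the retract additively, so that $e$ plays the role of $0$ and, by Remark~\ref{rem.rules}, $[x_1,\dots,x_{2k+1}] = x_1 - x_2 + x_3 - \cdots + x_{2k+1}$. First I would read off the sign of each entry of the defining heap expression from the parity of its position. The leading entry $e$, and for odd $n$ the trailing entry $e$, both contribute $0$ in the retract (the latter only serving to keep the number of entries odd), so they may be discarded. Introducing the \emph{corrected actions}
\[
a \triangleright x := ax - ae, \qquad x \triangleleft a := xa - ea,
\]
a short computation with the retract distributive laws displayed in the Remark following Definition~\ref{truss} then shows, uniformly in $n$, that
\[
\delta^n_e f(a_0,\dots,a_n) = a_0 \triangleright f(a_1,\dots,a_n) + \sum_{i=1}^n (-1)^i f(a_0,\dots,a_{i-1}a_i,\dots,a_n) + (-1)^{n+1} f(a_0,\dots,a_{n-1}) \triangleleft a_n.
\]
This is exactly the classical Hochschild coboundary, with left and right multiplication replaced by $\triangleright$ and $\triangleleft$; the base-point entries $a_0e$ and $ea_n$ are precisely what converts the bi-affine multiplication of $T$ into genuine actions.

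Next, using only associativity of the product and the retract distributive laws, I would check that $\triangleright$ and $\triangleleft$ are additive in the module variable and satisfy the bimodule relations
\[
a \triangleright (b \triangleright x) = (ab) \triangleright x, \qquad (x \triangleleft a)\triangleleft b = x \triangleleft (ab), \qquad (a \triangleright x)\triangleleft b = a \triangleright (x \triangleleft b).
\]
For instance $a \triangleright (b \triangleright x) = a(bx - be) - ae = a(bx) - a(be) = (ab)x - (ab)e = (ab)\triangleright x$, where the second equality uses $a(u-v) = au - av + ae$ and the third uses associativity; the other two are analogous. Additivity of $\triangleright$ and $\triangleleft$ in $x$ makes each of the face operators $D^0 f = a_0 \triangleright f(a_1,\dots,a_n)$, $D^i f = f(\dots,a_{i-1}a_i,\dots)$ for $1\le i\le n$, and $D^{n+1} f = f(a_0,\dots,a_{n-1})\triangleleft a_n$ a homomorphism of the additive cochain groups, so that $\delta^n_e = \sum_{i=0}^{n+1}(-1)^i D^i$ as group homomorphisms.

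The bimodule relations, together with associativity of the product inside the merge terms, are exactly the cosimplicial identities $D^j D^i = D^i D^{j-1}$ for $i<j$. Hence, expanding $\delta^{n+1}_e \circ \delta^n_e = \sum_{i,j}(-1)^{i+j} D^j D^i$ (legitimate since every $D^i$ is additive), the usual pairwise cancellation collapses the double sum to the zero homomorphism, i.e.\ to the constant function $e$; note that no linearity of $f$ is used, as this is a formal identity among the operators $D^i$. The remaining two assertions are now immediate. For $f$ the constant function $e$, i.e.\ the zero cochain, every summand vanishes, since $a_0 \triangleright 0 = a_0e - a_0e = 0$, each merge term is $0$, and $0 \triangleleft a_n = ea_n - ea_n = 0$; thus $\delta^n_e(e)=e$. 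Finally, $\delta^n_e$ is a heap homomorphism sending the base point $e$ of $C^n(T)$ to the base point $e$ of $C^{n+1}(T)$, so applying it to $f \cdot_e g = [f,e,g]$ gives $\delta^n_e(f \cdot_e g) = [\delta^n_e f, e, \delta^n_e g] = \delta^n_e(f) \cdot_e \delta^n_e(g)$, whence it is a homomorphism $G(C^n(T);e)\to G(C^{n+1}(T);e)$.

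The main obstacle is the opening step: the careful parity and sign bookkeeping needed to pass from the heap expression to the retract formula, and above all the recognition that the base-point corrections are exactly what replaces the bi-affine multiplication of the truss by the honest actions $\triangleright$ and $\triangleleft$. Once this reformulation is secured, the verification of the bimodule relations is routine and the vanishing of the square reduces verbatim to the classical Hochschild argument.
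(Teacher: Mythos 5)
Your proof is correct, and it takes a genuinely different route from the paper's. The paper proves $\delta^{n+1}_e\circ\delta^n_e=e$ by brute force: it writes both coboundaries in the retract $G(T;e)$, expands the composite separately for $n$ even and $n$ odd, and performs the telescoping cancellations by hand using the truss distributive laws. You instead isolate the structural reason the identity holds: the base-point entries $a_0e$ and $ea_n$ in Definition~\ref{def.cob} are exactly the corrections needed so that $a\triangleright x=ax-ae$ and $x\triangleleft a=xa-ea$ become additive actions of $T$ on the group $G(T;e)$ satisfying the bimodule relations, whereupon $\delta^n_e$ is literally the classical Hochschild coboundary $\sum_{i=0}^{n+1}(-1)^iD^i$ and the vanishing of the square follows from the cosimplicial identities $D^jD^i=D^iD^{j-1}$ ($i<j$) by the standard pairwise cancellation. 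I checked the details: your uniform sign formula agrees with the paper's even and odd cases (the trailing $e$ for $n$ odd only restores odd arity and contributes $0$ in the retract), the three bimodule relations follow from the retract distributive laws exactly as you compute, and the additivity of each $D^i$ legitimises distributing $\delta^{n+1}_e$ over the signed sum. The remaining two assertions ($\delta^n_e(e)=e$ and the group-homomorphism property) are handled the same way in both arguments. What your approach buys is a uniform treatment of the two parities, a conceptual explanation of the otherwise ad hoc extra entries in the coboundary formula, and a reduction to a classical argument; what the paper's direct computation buys is self-containedness, avoiding the need to set up the face-operator formalism and to verify the full list of cosimplicial identities (of which you explicitly check only the representative boundary cases, though the remaining ones are indeed routine).
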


\begin{proof}
Here we shall represent the heap operation $[---]$ as a linear combination of~$+$ and~$-$ as well as using the distributive laws from the retract $G(T;e)$ where $(T,[---],\cdot)$ is a truss.
For the case, where $n$ is even and $n+1$ is odd,
\begin{align*}
 &\delta^n_e f(a_0,\dots,a_n)= - a_0 e + a_0 f(a_1,\dots,a_n) + \sum^{n}_{j=1}(-1)^j f (a_0,\dots,a_{j-1}a_j,\dots,a_n) \\
 & \hphantom{\delta^n_e f(a_0,\dots,a_n)=}{} -f(a_0,\dots,a_{n-1})a_n+ea_n,\\
 &\delta^{n+1}_e f(a_0,\dots,a_{n+1})= - a_0 e + a_0 f(a_1,\dots,a_{n+1}) + \sum^{n+1}_{i=1}(-1)^i f (a_0,\dots,a_{i-1}a_i,\dots,a_{n+1}) \\
 &\hphantom{\delta^{n+1}_e f(a_0,\dots,a_{n+1})=}{} +f(a_0,\dots,a_{n})a_{n+1}-ea_{n+1}.
\end{align*}
We compose these functions
\begin{gather*}
 \delta^{n+1}_e\delta^{n}_e f(a_0,\dots,a_{n+1})\\
\qquad{} =- a_0 e + a_0 \delta^n_e f (a_1,\dots,a_{n+1})+ \sum_{i=1}^{n+1} (-1)^i \delta^n_e f (a_0,\dots,a_{i-1}a_i,\dots,a_{n+1}) \\
\qquad\quad{} + \delta^n_e f(a_0,\dots,a_n)a_{n+1} - ea_{n+1}.
\end{gather*}
Cancelling terms with alternating signs, we find
\begin{gather*}
   \sum_{i=1}^{n+1} (-1)^i \delta^n_e f (a_0,\dots,a_{i-1}a_i,\dots,a_{n+1}) \\
   \qquad{} = a_0 a_1 e - a_0 a_1 f(a_2,\dots,a_{n+1})
 - \sum_{i=2}^{n+1}(-1)^{i+1}a_0 f(a_1,\dots,a_{i-1}a_i,\dots,a_{n+1}) \\
\qquad\quad{}  - \sum_{j=1}^n (-1)^j f(a_0,\dots,a_{j-1}a_j,\dots,a_n)a_{n+1}  + f (a_0,\dots,a_{n-1})a_n a_{n+1} - ea_n a_{n+1}.
\end{gather*}
This can then be simplified further by applying the definition of $\delta^n_e$ and the truss distributive laws
\begin{gather*}
 \sum_{i=1}^{n+1} (-1)^i \delta^n_e f (a_0,\dots,a_{i-1}a_i,\dots,a_{n+1})\\
 \qquad =  a_0e - a_0\delta^n_e f (a_1,\dots,a_{n+1}) - \delta^n_e f (a_0,\dots,a_{n})a_{n+1} + ea_{n+1}.
\end{gather*}
Making the substitution in the formula for the composition, we thus find
\begin{gather*}
\delta^{n+1}_e\delta^{n}_e f(a_0,\dots,a_{n+1})\\
\qquad {}= - a_0 e
 +a_0\delta^n_e f (a_1,\dots,a_{n+1}) +a_0e - a_0\delta^n_e f (a_1,\dots,a_{n+1})
  - \delta^n_e f (a_0,\dots,a_{n})a_{n+1} \\
  \qquad\quad{} + ea_{n+1} + \delta^n_e f (a_0,\dots,a_{n})a_{n+1}  -ea_{n+1}=e.
\end{gather*}

 Now we may look at the case where $n$ is odd and $n+1$ is even.
In this case the composition comes out as
\begin{gather*}
 \delta^{n+1}_e\delta^{n}_e f(a_0,\dots,a_{n+1})\\
 \qquad{} = - a_0 e + a_0 \delta^n_e f (a_1,\dots,a_{n+1})  + \sum_{i=1}^{n+1} (-1)^i \delta^n_e f (a_0,\dots,a_{i-1}a_i,\dots,a_{n+1}) \\
\qquad\quad{} - \delta^n_e f(a_0,\dots,a_n)a_{n+1} + ea_{n+1}.
 \end{gather*}
Once more, cancelling the alternating terms using the definition of functions $\delta^n_e$ as well as the distributive laws for trusses, we find
\begin{gather*}
  \sum_{i=1}^{n+1} (-1)^i \delta^n_e f (a_0,\dots,a_{i-1}a_i,\dots,a_{n+1}) \\
  \qquad{}= a_0 a_1 e - a_0 a_1 f(a_2,\dots,a_{n+1})
 - \sum_{i=2}^{n+1}(-1)^{i+1}a_0 f(a_1,\dots,a_{i-1}a_i,\dots,a_{n+1}) \\
\qquad\quad{} + \sum_{j=1}^n (-1)^j f(a_0,\dots,a_{j-1}a_j,\dots,a_n)a_{n+1} \\
 \qquad\quad{} + e a_{n+1} -a_0 e
 + f (a_0,\dots,a_{n-1})a_n a_{n+1} -  ea_n a_{n+1} \\
\qquad{} = a_0e - a_0\delta^n_e f (a_1,\dots,a_{n+1})+ \delta^n_e f (a_0,\dots,a_{n})a_{n+1} - ea_{n+1}.
\end{gather*}
Using the above calculation, we then expand
\begin{gather*}
\delta^{n+1}_e \delta^{n}_e f(a_0,\dots,a_{n+1})\\
\qquad{}  =
- a_0 e + a_0 \delta^n_e f (a_1,\dots,a_{n+1}) + a_0e - a_0\delta^n_e f (a_1,\dots,a_{n+1})+ \delta^n_e f (a_0,\dots,a_{n})a_{n+1}\\
\qquad\quad{}- ea_{n+1}  - \delta^n_e f(a_0,\dots,a_n)a_{n+1} + ea_{n+1} = e
\end{gather*}
as required.

Since $e$ is the neutral element in $G(T;e)$, we immediately find that, for all $a_0,\dots, a_n$ and the constant function $e\colon T^n\to T$, $(a_0\dots a_{n-1})\mapsto e$,
\[
\delta_e^n(e)(a_0,\dots, a_n) = -a_0e +a_0e +(-1)^{n+1}ea_n + (-1)^n ea_n = e.
\]
Now, the observation that any homomorphism of heaps $f\colon H\to K$ is a homomorphism of groups $f\colon G(H;e)\to G(K;f(e))$, for all $e\in H$ confirms the final assertion.
\end{proof}

\begin{Definition}\label{def.cob.coc}
 In the setup of Definition~\ref{def.cob}, we define the heap of {\em $e$-relative $n$-cocycles}
 \[
 Z^n_e(T) := \{f\in C^{n}(T)\; |\; \delta_e^n(f) =e\},
 \]
 and the heap of {\em $e$-relative $n$-coboundaries}
 \[
 B^n_e(T):= \im \delta_e^{n-1}.
 \]
\end{Definition}

Since Theorem~\ref{deltathme} implies that $B^n_e(T)$ is a sub-heap of $Z^n_e(T)$, we can formulate the definition of the main object of study in this section.
\begin{Definition}\label{quotient.def}
For all $n\in \N$ and $e\in T$, the quotient heap
\[ H^n_e(T) = Z^n_e(T)/B^n_e(T)
\]
is called the {\em $n$-th $e$-relative Hochschild cohomology heap} of $T$.
\end{Definition}

\begin{Remark}\label{rem.coh.gr}
 The last assertion of Theorem~\ref{deltathme} ensures the existence of the cochain complex of abelian groups $\big(\delta_e^n\colon G(C^n(T);e) \to G\big(C^{n+1}(T);e\big)\big)$. This allows one for an abelian group interpretation of $e$-retracts of $e$-relative Hochschild cohomology heaps, namely, $G(H^n_e(T);\bar{e})$, where $\bar{e}$ is the class of the constant heap map $e$, is the $n$-th cohomology group of the above complex of abelian groups $G(C^n(T);e)$.
\end{Remark}
We next show that the relative Hochschild cohomology heaps for different $e$ can be identified up to isomorphism. We start with the following lemma.

\begin{Lemma} \label{complem}
For all heap homomorphisms $f\in C^n(T)$ and $e,e'\in T$,
\[
\delta^{n}_{e'} ( f )= \tau_{e}^{e'} \circ \delta^{n}_{e} ( \tau^{e}_{e'} \circ f). \]
\end{Lemma}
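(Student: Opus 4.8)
The plan is to collapse the whole identity into a single additive computation inside one retract group, namely $G(T;e)$. There the heap operation becomes the alternating sum $[x_1,\dots,x_{2m+1}] = x_1 - x_2 + x_3 - \cdots + x_{2m+1}$ and $e$ is the neutral element $0$. Fixing this one retract is the key simplification: both $\delta^n_e(f)$ and $\delta^n_{e'}(f)$, together with the two translation isomorphisms, can then be written additively in the same abelian group and compared termwise, exactly in the style of the retract calculation already carried out in the proof of Theorem~\ref{deltathme}.

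First I would record the retract forms of the two coboundaries in $G(T;e)$. For $\delta^n_e(f)$ this is the formula from the proof of Theorem~\ref{deltathme}, with the leading and trailing occurrences of $e$ vanishing since $e=0$. For $\delta^n_{e'}(f)$ the same alternating-sum expansion applies, but now $e'$ is a genuinely nonzero element of $G(T;e)$, so its contributions survive: a leading $+e'$, the corrected products $-a_0 e'$ and $(-1)^n e' a_n$, and, for odd $n$, an extra trailing $+e'$ arising from the additional $e$ appended in the odd case of Definition~\ref{def.cob}. Next I would write the translations as additive shifts: by Definition~\ref{ret}, $\tau^e_{e'}(a)=[a,e',e]=a-e'$ and $\tau^{e'}_e(a)=[a,e,e']=a+e'$ in $G(T;e)$. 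Hence precomposing $f$ with $\tau^e_{e'}$ simply replaces it by the pointwise shifted cochain $g:=f-e'$, and postcomposing with $\tau^{e'}_e$ adds $e'$ to the final value.

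The heart of the verification is then to expand $\delta^n_e(g)$ with $g=f-e'$. Here the truss distributive laws (in the retract form of the preliminary remark) are indispensable: expanding the multiplicative terms $a_0\,g(a_1,\dots,a_n)=a_0(f-e')$ and $g(a_0,\dots,a_{n-1})\,a_n=(f-e')a_n$ produces compensating summands $a_0 e$ and $e a_n$, which cancel exactly against the $-a_0 e$ and $(-1)^n e a_n$ correction terms already present in the base formula for $\delta^n_e$. After applying $\tau^{e'}_e$ (that is, adding $e'$) and collecting, what remains is the $f$-core of $\delta^n_{e'}(f)$ together with a residue assembled from $-a_0 e'$, $(-1)^n e' a_n$, and a multiple of $e'$ governed by the alternating sum $\sum_{j=1}^n (-1)^j$ coming from the middle terms $g(a_0,\dots,a_{j-1}a_j,\dots,a_n)=f(\dots)-e'$.

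The decisive and most delicate step, where the bookkeeping must be done carefully, is matching these leftover $e'$-terms against the $e'$-contributions of $\delta^n_{e'}(f)$ in a parity-sensitive way. For even $n$ one has $\sum_{j=1}^n(-1)^j=0$, so the single $+e'$ supplied by the final $\tau^{e'}_e$-shift matches the leading $+e'$ of $\delta^n_{e'}(f)$, while $-a_0 e'$ and $(-1)^n e' a_n$ match directly. For odd $n$ one has $\sum_{j=1}^n(-1)^j=-1$, which contributes an additional $+e'$; this, together with the $+e'$ from the $\tau^{e'}_e$-shift, reproduces precisely the two $e'$-terms of $\delta^n_{e'}(f)$, namely the leading one and the trailing one appended in the odd clause of Definition~\ref{def.cob}. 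I expect the only real work to be this parity split and the confirmation that the appended $e$ in the odd case lines up with the arithmetic of the alternating sum; once that is checked, the remaining identifications are routine cancellations in $G(T;e)$.
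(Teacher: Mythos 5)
Your proposal is correct and amounts to the same expansion-and-cancellation argument as the paper's proof: the paper carries out the computation in heap-bracket notation, showing $\delta^n_e(\tau^e_{e'}\circ f)=[e,e',\delta^n_{e'}f]$ before applying the outer translation, which is exactly your identity $\delta^n_e(f-e')=\delta^n_{e'}(f)-e'$ written in $G(T;e)$; your parity bookkeeping for the $e'$-terms and the cancellations via the retract distributive laws check out. (Only a cosmetic slip: $\tau^e_{e'}\circ f$ is \emph{post}-composition of $f$ with $\tau^e_{e'}$, though your formula $g=f-e'$ is the right one.)
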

\begin{proof}
First, we look at the case where $n$ is even,
\[\delta^{n}_{e'} f(a_0,\dots,a_n) = \tau^{e'}_{e} (\delta^{n}_{e}(\tau^{e}_{e'} \circ f)(a_0,\dots,a_n) ). \]
Let us first calculate
\begin{align*}
 &\delta^n_{e} (\tau^e_{e'} \circ f)(a_0 ,\dots, a_n) = [e,a_0 e, a_0 f(a_1,\dots,a_n), a_0 e', a_0 e,f(a_0 a_1,\dots,a_n),e',e,\dots, \\
 & \hphantom{\delta^n_{e} (\tau^e_{e'} \circ f)(a_0 ,\dots, a_n) =[}{} f(a_0,\dots,a_{n-1}a_n),e',e,f(a_0,\dots,a_{n-1})a_n,e' a_n,ea_n,ea_n], \\
 & \hphantom{\delta^n_{e} (\tau^e_{e'} \circ f)(a_0 ,\dots, a_n) }{} =[e,a_0 e', a_0 f(a_1,\dots,a_n),f(a_0 a_1,\dots,a_n),\dots, \\
 & \hphantom{\delta^n_{e} (\tau^e_{e'} \circ f)(a_0 ,\dots, a_n) =[}{}  f(a_0,\dots,a_{n-1} a_n), f(a_0,\dots,a_{n-1})a_n,e' a_n], \\
 & \hphantom{\delta^n_{e} (\tau^e_{e'} \circ f)(a_0 ,\dots, a_n)}{} = [e,e', \delta^n_{e'} f(a_0,\dots,a_n)].
\end{align*}
Now, for the case where we have $n$ odd, by similar calculation,
\begin{gather*}
 \delta^n_{e} (\tau^e_{e'} \circ f)(a_0 ,\dots, a_n) = [e,a_0 e, a_0 f(a_1,\dots,a_n), a_0 e', a_0 e,f(a_0 a_1,\dots,a_n),e',e, \\
\hphantom{\delta^n_{e} (\tau^e_{e'} \circ f)(a_0 ,\dots, a_n) = [}{}
 f(a_0, a_1 a_2,\dots,a_n),e',e, \dots,f(a_0,\dots,a_{n-1}a_n),e',e, \\
\hphantom{\delta^n_{e} (\tau^e_{e'} \circ f)(a_0 ,\dots, a_n) = [}{} f(a_0,\dots,a_{n-1})a_n,e' a_n,ea_n,ea_n,e] \\
\hphantom{\delta^n_{e} (\tau^e_{e'} \circ f)(a_0 ,\dots, a_n) }{} = [e,a_0 e,a_0 f(a_1,\dots,a_n), f(a_0 a_1,\dots,a_n),e',e, \\
\hphantom{\delta^n_{e} (\tau^e_{e'} \circ f)(a_0 ,\dots, a_n) = [}{} f(a_0,a_1 a_2,\dots,a_n),\dots,f(a_0,\dots,a_{n-1}a_n), \\
\hphantom{\delta^n_{e} (\tau^e_{e'} \circ f)(a_0 ,\dots, a_n) = [}{} f(a_0,\dots,a_{n-1})a_n,e' a_n,e]\\
\hphantom{\delta^n_{e} (\tau^e_{e'} \circ f)(a_0 ,\dots, a_n)}{} =[e,e', \delta^n_{e'} f(a_0,\dots,a_n)].
\end{gather*}
Thus, for both cases,
\begin{align*}
\tau^{e'}_e (\delta^n_e (\tau^e_{e'} \circ f)(a_0,\dots,a_n)) = [e,e',\delta^n_{e'} f(a_0,\dots,a_n),e,e'] = \delta^n_{e'} f(a_0,\dots,a_n).\tag*{\qed}
\end{align*} \renewcommand{\qed}{}
\end{proof}

Thanks to the commutativity of the heap operations and the Mal'cev identities, for all ${e,e'\in T}$ and $n\in \N$, we can consider heap isomorphisms
\[
\vec{\tau}_e^{\,e'}\colon\ C^n(T)\to C^n(T), \qquad f\mapsto \tau_e^{e'}\circ f.
\]
In terms of these isomorphisms, the statement of Lemma~\ref{complem} can be rephrased as
\begin{gather}\label{eq.tau}
 \delta_{e'}^n \big(\vec{\tau}_e^{\,e'}(f)\big) = \vec{\tau}_e^{\,e'}(\delta_e^n(f))
\end{gather}
for all $f\in C^n(T)$.
\begin{Lemma}\label{lem.b.z}
For all $e,e' \in A$, the maps $\vec{\tau}_e^{\,e'}$ restrict to isomorphisms $B^n_e(T) \to B^n_{e'}(T)$ and~$Z^n_e(T) \to Z^n_{e'}(T)$.
\end{Lemma}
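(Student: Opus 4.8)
The plan is to leverage the intertwining identity \eqref{eq.tau}, which expresses that $\vec{\tau}_e^{\,e'}$ is a cochain map from the complex $\big(\delta_e^n\big)$ to the complex $\big(\delta_{e'}^n\big)$, together with the fact that $\vec{\tau}_e^{\,e'}$ is already a heap isomorphism of $C^n(T)$ onto itself with inverse $\vec{\tau}_{e'}^{\,e}$. Since bijectivity on the whole of $C^n(T)$ is granted, it suffices to check that $\vec{\tau}_e^{\,e'}$ maps each relevant sub-heap into its $e'$-counterpart; the reverse inclusion then follows by exchanging the roles of $e$ and $e'$ and invoking that $\vec{\tau}_{e'}^{\,e}$ is the inverse map.

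First I would record the one computation needed to transport the cocycle condition: the constant cochain with value $e$ is sent to the constant cochain with value $\tau^{e'}_e(e)=[e,e,e']=e'$, so that $\vec{\tau}_e^{\,e'}(e)=e'$ as elements of $C^{n+1}(T)$. This is the only point at which the Mal'cev identity intervenes. Then for a cocycle $f\in Z^n_e(T)$, i.e.\ $\delta_e^n(f)=e$, identity \eqref{eq.tau} gives $\delta_{e'}^n\big(\vec{\tau}_e^{\,e'}(f)\big)=\vec{\tau}_e^{\,e'}\big(\delta_e^n(f)\big)=\vec{\tau}_e^{\,e'}(e)=e'$, whence $\vec{\tau}_e^{\,e'}(f)\in Z^n_{e'}(T)$ and therefore $\vec{\tau}_e^{\,e'}\big(Z^n_e(T)\big)\subseteq Z^n_{e'}(T)$.

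For coboundaries I would take $g=\delta_e^{n-1}(h)\in B^n_e(T)$ and apply \eqref{eq.tau} at degree $n-1$ to obtain $\vec{\tau}_e^{\,e'}(g)=\delta_{e'}^{n-1}\big(\vec{\tau}_e^{\,e'}(h)\big)\in\im\delta_{e'}^{n-1}=B^n_{e'}(T)$, so $\vec{\tau}_e^{\,e'}\big(B^n_e(T)\big)\subseteq B^n_{e'}(T)$. In both cases the symmetric inclusions for $\vec{\tau}_{e'}^{\,e}$ combine with the inverse relation $\vec{\tau}_{e'}^{\,e}\circ\vec{\tau}_e^{\,e'}=\id$ to upgrade the inclusions to equalities, delivering the desired heap isomorphisms $Z^n_e(T)\to Z^n_{e'}(T)$ and $B^n_e(T)\to B^n_{e'}(T)$. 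The argument is essentially formal; the only care needed is bookkeeping with the sub/superscript conventions for the translation isomorphisms and applying \eqref{eq.tau} at the correct cochain degree ($n-1$ for coboundaries, $n$ for cocycles). Since the substantive computation has already been carried out in Lemma~\ref{complem}, I do not expect any real obstacle to remain.
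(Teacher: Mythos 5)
Your proposal is correct and follows essentially the same route as the paper: both rest on the intertwining identity \eqref{eq.tau} together with the observation $\vec{\tau}_e^{\,e'}(e)=[e,e,e']=e'$, applied at degree $n$ for cocycles and degree $n-1$ for coboundaries. Your explicit upgrade of the inclusions to equalities via the inverse $\vec{\tau}_{e'}^{\,e}$ is a small, welcome elaboration that the paper leaves implicit.
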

\begin{proof}
This follows (almost) immediately from Lemma~\ref{complem} or its equivalent formulation in~\eqref{eq.tau}. Specifically, if $g = \delta_e^n(f)\in B_e^n(T)$, then
$\vec{\tau}_e^{\,e'}(g) = \delta_{e'}^n\big(\vec{\tau}_e^{\,e'}(f)\big) \in B_{e'}^n(T)$. If $\delta_e^n(f) = e$, then
\[e' = [e,e,e'] = \vec{\tau}_e^{\,e'}\big(\delta_e^n(f)\big) = \delta^{n}_{e'}\big(\vec{\tau}_e^{\,e'}(f)\big),
\]
i.e., $\vec{\tau}_e^{\,e'}(f)\in Z_{e'}^n(T)$.
\end{proof}

Put together, Lemmas~\ref{complem} and~\ref{lem.b.z} yield the identification of cohomology heaps sought for.
\begin{Theorem}\label{thm.Hoch}
 Let $T$ be a truss. Then, for all $n\in \N$ and $e,e'\in T$,
 \[
 H^n_e(T) \cong H^n_{e'}(T).
 \]
\end{Theorem}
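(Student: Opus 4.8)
The plan is to promote the heap isomorphism $\vec{\tau}_e^{\,e'}\colon C^n(T)\to C^n(T)$ to an isomorphism of the cohomology heaps by passing to quotients. By Lemma~\ref{lem.b.z}, $\vec{\tau}_e^{\,e'}$ restricts to a heap isomorphism $Z^n_e(T)\to Z^n_{e'}(T)$ that carries the sub-heap $B^n_e(T)$ bijectively onto the sub-heap $B^n_{e'}(T)$. The only thing left is the general fact that a heap isomorphism taking one sub-heap onto another descends to an isomorphism of the corresponding quotient heaps, which I would make explicit here.

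First I would define the candidate map $\overline{\tau}\colon H^n_e(T)\to H^n_{e'}(T)$ on classes by $\overline{\tau}(\bar f)=\overline{\vec{\tau}_e^{\,e'}(f)}$, where bars denote classes in the respective quotients $Z/B$. To check that this is well defined I would use the description of the sub-heap equivalence recalled before Definition~\ref{truss}: $\bar f=\bar g$ in $H^n_e(T)$ means $[f,g,x]\in B^n_e(T)$ for any $x\in B^n_e(T)$. Applying the heap homomorphism $\vec{\tau}_e^{\,e'}$, using that it preserves $[---]$ together with $\vec{\tau}_e^{\,e'}(B^n_e(T))=B^n_{e'}(T)$, one obtains $[\vec{\tau}_e^{\,e'}(f),\vec{\tau}_e^{\,e'}(g),\vec{\tau}_e^{\,e'}(x)]\in B^n_{e'}(T)$ with $\vec{\tau}_e^{\,e'}(x)\in B^n_{e'}(T)$, so $\vec{\tau}_e^{\,e'}(f)$ and $\vec{\tau}_e^{\,e'}(g)$ represent the same class; hence $\overline{\tau}$ is well defined. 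That $\overline{\tau}$ is a heap homomorphism is then immediate, since the quotient heap operation is the one inherited from $Z$ and $\vec{\tau}_e^{\,e'}$ respects $[---]$.

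Finally I would establish bijectivity by exhibiting an explicit inverse. Since $\tau_{e'}^{e}$ is inverse to $\tau_{e}^{e'}$ (as noted after the definition of translation isomorphisms), the maps $\vec{\tau}_e^{\,e'}$ and $\vec{\tau}_{e'}^{\,e}$ are mutually inverse heap isomorphisms of $C^n(T)$, and by Lemma~\ref{lem.b.z} each restricts appropriately. Running the previous paragraph with the roles of $e$ and $e'$ interchanged yields a well-defined heap homomorphism $\overline{\tau}^{-1}\colon H^n_{e'}(T)\to H^n_e(T)$, $\bar g\mapsto \overline{\vec{\tau}_{e'}^{\,e}(g)}$, and the two composites are the identities on classes because $\vec{\tau}_{e'}^{\,e}\circ\vec{\tau}_e^{\,e'}=\id$ and $\vec{\tau}_e^{\,e'}\circ\vec{\tau}_{e'}^{\,e}=\id$. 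This gives the desired isomorphism $H^n_e(T)\cong H^n_{e'}(T)$.

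I expect no genuine obstacle: the substantive content was already extracted in Lemmas~\ref{complem} and~\ref{lem.b.z}, so the theorem reduces to functoriality of the heap quotient. The one point requiring care is the well-definedness check, since heaps have no distinguished basepoint and the quotient is defined through the sub-heap equivalence relation rather than through cosets of a normal subgroup; this is why I would phrase the argument directly in terms of $[f,g,x]\in B^n_e(T)$ rather than importing ring-theoretic intuition about coset representatives.
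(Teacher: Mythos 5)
Your proposal is correct and follows essentially the same route as the paper: both use the sub-heap equivalence characterisation $[f,g,x]\in B^n_e(T)$ for $x\in B^n_e(T)$, apply the heap isomorphism $\vec{\tau}_e^{\,e'}$, and invoke Lemmas~\ref{complem} and~\ref{lem.b.z} to see that the image lands in $B^n_{e'}(T)$, so the map descends to the quotients. The only difference is that you spell out the inverse $\vec{\tau}_{e'}^{\,e}$ explicitly, which the paper leaves implicit.
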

\begin{proof}
 By Lemmas~\ref{complem} and~\ref{lem.b.z}, the isomorphisms $\vec{\tau}_e^{\,e'}$ descent to the isomorphisms $H^n_e(T)\to H^n_{e'}(T)$; for $f,g\in Z_e^n(T)$ represent the same class in $H_e^n(T)$ if and only if
 \[\big[f,g,\delta^{n-1}_e(h)\big] \in B_e^n(T)\]
 for some (equivalently all) $h\in C^{n-1}(T)$. Hence,
 \begin{align*}
 \big[\vec{\tau}_e^{\,e'}(f),\vec{\tau}_e^{\,e'}(g),\delta^{n-1}_{e'}\big(\vec{\tau}_e^{\,e'}(h)\big)\big] &=\big[\vec{\tau}_e^{\,e'}(f),\vec{\tau}_e^{\,e'}(g),\vec{\tau}_e^{\,e'}\big(\delta_e^{n-1}(h)\big)\big]\\
 &= \vec{\tau}_e^{\,e'}\left(\left[f,g,\delta_e^{n-1}(h)\right]\right)\in B_{e'}^n(T).\tag*{\qed}
 \end{align*}\renewcommand{\qed}{}
\end{proof}
In view of Theorem~\ref{thm.Hoch} rather than talking about $e$-relative Hochschild cohomology heaps, we might talk just as well about simply Hochschild cohomology heaps and drop the subscript $e$ from the notation.

\begin{Remark}\label{rem.iso.gr}
 One easily checks that $\vec{\tau}_e^{\,e'}(e) =e'$, and hence the isomorphism described in Theorem~\ref{thm.Hoch} is an isomorphism of abelian groups $G(H^n_e(T);\bar{e})\simeq G(H^n_{e'}(T); \overline{e'})$.
\end{Remark}

In case of the Hochschild cohomology of algebras, one-cocycles correspond to derivations. A~similar statement can be made in the case of the cohomology of trusses, although this correspondence is not quite as direct as in the ring case.

\begin{Definition}[\cite{Brz:Lie}]
 Let $T$ be a truss. A heap homomorphism $D\colon T\to T$ is called a~\textit{derivation} if, for all $a,b\in T$,
 \[
 D(ab) = [D(a)b, ab, aD(b)].
 \]
 Derivations on $T$ form a heap which is denoted by $\operatorname{Der}(T)$.
\end{Definition}

\begin{Proposition}
 For all $e\in T$, $\operatorname{Der}(T)\cong Z_e^1(T)$ as heaps.
\end{Proposition}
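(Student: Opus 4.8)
The plan is to write both the cocycle condition and the derivation condition explicitly in the retract $G(T;e)$, and then to observe that they coincide up to a shift by the identity map. First I would specialise the coboundary operator to $n=1$. Representing the heap operation additively in $G(T;e)$ exactly as in the proof of Theorem~\ref{deltathme}, one obtains
\[
\delta_e^1 f(a,b) = -ae + a f(b) - f(ab) + f(a)b - eb ,
\]
so that a heap homomorphism $f$ lies in $Z_e^1(T)$ precisely when
\[
f(ab) = a f(b) + f(a)b - ae - eb \qquad \text{for all } a,b\in T .
\]
Unwinding the definition of a derivation in the same retract gives $D(ab)=[D(a)b,ab,aD(b)] = a D(b) + D(a)b - ab$. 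Thus both conditions prescribe the value of the ``Leibniz defect'' $h\mapsto h(ab)-a h(b)-h(a)b$: it equals $-ae-eb$ for cocycles and $-ab$ for derivations.

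The key computational step I would isolate is an additivity formula for this defect. Writing $R_h(a,b):=h(ab)-a h(b)-h(a)b$ in $G(T;e)$ and using only the retract distributive laws $a(x+y)=ax-ae+ay$ and $(x+y)b=xb-eb+yb$, a direct calculation yields
\[
R_{g+\chi}(a,b) = R_g(a,b) + R_\chi(a,b) + ae + eb
\]
for the pointwise sum $g+\chi$ in $G(C^1(T);e)$. Now $R_{\id}(a,b)=ab-ab-ab=-ab$, so taking $\chi=\id$ gives $R_{f+\id}=R_f-ab+ae+eb$. Hence $R_f=-ae-eb$ (that is, $f\in Z_e^1(T)$) holds if and only if $R_{f+\id}=-ab$ (that is, $f+\id\in\operatorname{Der}(T)$). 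This singles out the correspondence $\Phi(f)=f+\id$, explicitly $\Phi(f)(a)=[f(a),e,a]$, with candidate inverse $\Psi(D)=D-\id$, explicitly $\Psi(D)(a)=[D(a),a,e]$.

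It then remains to check that $\Phi$ is a heap isomorphism $Z_e^1(T)\to\operatorname{Der}(T)$. Since $\id$ and the constant map $e$ are heap homomorphisms and $\heap(T,T)$ is a heap under the pointwise operation, $\Phi$ is nothing but the restriction of the translation isomorphism $f\mapsto[f,e,\id]$ of the heap $C^1(T)$; as such it is automatically a bijective heap map with inverse $D\mapsto[D,\id,e]=\Psi(D)$, by the same elementary identity that makes the maps $\tau_{e'}^{e}$ heap isomorphisms. The defect computation of the previous paragraph shows $\Phi$ carries $Z_e^1(T)$ into $\operatorname{Der}(T)$ and $\Psi$ carries $\operatorname{Der}(T)$ into $Z_e^1(T)$, so $\Phi$ restricts to the desired isomorphism.

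The genuinely non-obvious point, and the one I expect to be the main obstacle, is that---unlike the classical ring case, where one-cocycles are literally derivations---neither condition here is homogeneous: the cocycle carries the affine correction $-ae-eb$ while the derivation carries $-ab$. Consequently the correspondence cannot be the identity but must be the shift by $\id$, and the entire argument depends on tracking these correction terms accurately through the retract distributive laws. Once the additivity formula for $R_h$ is in place, everything else is routine bookkeeping in $G(T;e)$.
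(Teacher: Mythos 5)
Your proof is correct and uses essentially the same isomorphism as the paper: your $\Phi(f)(a)=[f(a),e,a]$ and $\Psi(D)(a)=[D(a),a,e]$ are exactly the paper's $\Theta^{-1}$ and $\Theta$, i.e., the shift by $\id$ in the retract. The only difference is organizational -- the paper verifies the two directions by direct bracket computations, whereas you package both at once via the additivity formula for the Leibniz defect $R_h$ and the observation that $\Phi$ is a translation of the heap $C^1(T)$; both routes are sound.
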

\begin{proof}
 The isomorphism and its inverse are given by
 \begin{alignat*}{3}
 &\Theta\colon\ \operatorname{Der}(T)\to Z^1_e(T),\qquad && \Theta(D)\colon\ a\mapsto [D(a),a,e],&
 \\
 & \Theta^{-1}\colon\ Z^1_e(T) \to \operatorname{Der}(T), \qquad&& \Theta^{-1}(f)\colon\ a\mapsto [f(a),e,a].&
 \end{alignat*}
It is clear that the defined maps are inverses of each other. Thus, it remains to be checked if their domains and codomains are as stated.

If $D$ is a derivation, then, for all $a,b\in T$,
\begin{align*}
 \delta_e^1(\Theta(D))(a,b) &= [e,ae,a\Theta(D)(b), \Theta(D)(ab), \Theta(D)(a)b, eb, e]\\
 &= [e,ae,aD(b),ab,ae, D(ab), ab,e,D(a)b, ab, eb,eb,e]\\
 &=[e,D(ab), aD(b),ab, D(a)b] = [e,D(ab), D(ab)] =e,
\end{align*}
where the second equality uses the truss distributive law, the third one arises from the cancellation and reshuffling rules described in Remark~\ref{rem.rules}, and the penultimate equality is the definition of the derivation. Thus, $\Theta(D) \in Z_e^1(T)$ as required.

In the converse direction, if $f\in Z_e^1(T)$, then, for all $a,b\in T$,
\[
[e, ae, af(b), f(ab), f(a)b, eb, e]= e,
\]
and so by Remark~\ref{rem.equal}, equivalently
\[
f(ab) = [f(a)b, eb, af(b),ae, e].
\]
Therefore,
\begin{align*}
 \big[\Theta^{-1}(f)(a)b,ab,a\Theta^{-1}(f)b\big] = [f(a)b,eb,ab,ab,af(b),ae,ab]= [f(ab),e,ab] = \Theta^{-1}(f)(ab),
\end{align*}
so that $\Theta^{-1}(f)$ is a derivation on $T$ as required.
\end{proof}

As an illustration of Hochschild cohomology of trusses, we compute the cohomology heaps of the second of the trusses \eqref{truss.ab} in Remark~\ref{rem.trusses}.

\begin{Example}\label{ex.HH.ab}
 For an abelian group $A$, let us denote by $\mathcal{L}(A)$ the truss with the product given by the left projection, that is $ab =a$, then
 \begin{align*}
 H^n(\mathcal{L}(A)) \cong \begin{cases}
 \operatorname{End}(A) & \mbox{for}\ n=1, \cr
 \ast & \mbox{otherwise}.
 \end{cases}
 \end{align*}
\end{Example}
\begin{proof}
 We perform all computations relative to the neutral element $0$ in $A$. In view of the product in $\mathcal{L}(A)$, the formula for the coboundary operator (relative to $0$ and written in the abelian group form) comes out as
 \begin{gather}\label{dnf}
 \delta^n f(a_0,\dots,a_n) = \sum^{n-1}_{i=1}(-1)^i f \big(a_0,\dots ,\widehat{a_i},\dots,a_n\big),
\end{gather}
where $\widehat{a_i}$ indicates the absence of $a_i$.

First, note that, for all $a,b\in A$,
 \begin{gather}\label{0.cob}
 \delta^0(a)(b) = -a,
 \end{gather}
 and hence $H^0(\mathcal{L}(A))= Z^0(\mathcal{L}(A)) =\{0\}$.

 For $n\geq 2$ and $f\in C^n(\mathcal{L}(A))$, $\delta^n (f) = 0$ if and only if, for all $a_0,\dots, a_n\in A$,
 \begin{align}
 f(a_0,a_2,\dots , a_n)={}& f(a_0,a_1,a_3,\dots, a_n) - f(a_0,a_1,a_2,a_4,\dots, a_n)+\cdots \nonumber\\
 &+ (-1)^{n+1}f(a_0,a_1,a_2,\dots, a_{n-2}, a_n).\label{n.form}
 \end{align}
 The left-hand side of equation \eqref{n.form} is independent of $a_1$. Thus, setting $a_1=0$ and relabelling the indices, we find that $f$ is an $n$-cocycle if and only if, for all $a_0,\dots, a_{n-1}$,
 \begin{align}
 f(a_0,a_1,\dots , a_{n-1}) ={}& f(a_0,0, a_2,\dots, a_{n-1}) - f(a_0,0,a_1,a_3,\dots, a_{n-1})+\cdots\nonumber\\
 & + (-1)^{n+1}f(a_0,0,a_1,\dots, a_{n-3}, a_{n-1}).\label{n.constraint}
 \end{align}
 Set
 \[
 g\colon\ A^{n-1} \to A, \qquad (a_0,a_1,\dots , a_{n-2})\mapsto -f(a_0,0,a_1,\dots , a_{n-2}).
 \]
 Since $f\in C^n(\mathcal{L}(A))$, the map $g$ is a heap homomorphism in all arguments, and hence $g\in C^{n-1}(\mathcal{L}(A))$. The formula \eqref{n.constraint} immediately yields $\delta^{n-1}(g) =f$, and hence every $n$-cocycle is also an $n$-coboundary. Therefore, all Hochschild heaps are trivial whenever $n\geq 2$.

 Finally, elements of $C^1(\mathcal{L}(A))$ are heap endomorphisms of $f$, i.e., any functions $f\colon A\to A$ such that $f(a-b+c) = f(a)-f(b) +f(c)$. Given such an $f$, the map $g(a) = f(a) -f(0)$ is additive. Conversely, given a group endomorphism $g$ of $A$ and $c\in A$, the map $f(a) = g(a) +c$ is a heap endomorphism. The formula \eqref{dnf} implies that every heap endomorphism $f\colon A\to A$ is a~one-cocycle, while \eqref{0.cob} yields that two one-cocycles belong to the same cohomology class if and only if they differ by a~constant (i.e., they correspond to the same abelian group endomorphism). This establishes the isomorphism of $H^1(\mathcal{L}(A))$ with the heap of additive endomorphisms of~$A$.
\end{proof}

\section{Nijenhuis products and operators on trusses}\label{section4}

In this section, we transfer the notions of Nijenhuis products and operators introduced in~\cite{4} from rings to trusses, and determine the sufficient and necessary conditions for the associativity of the Nijenhuis product. We also study compatibility of Nijenhuis operators and give general examples as well as classify all Nijenhuis operators on commutative trusses built on the group of integers (see Remark~\ref{rem.trusses}).
\begin{Definition}\label{def.Nij.tor}
Let $T$ be a truss. For all heap homomorphisms $N\colon T \rightarrow T$, the binary operation~$\circ_{N}$ on~$T$, defined by
\[a \circ_{N} b = [N(a)b, N(ab), aN(b)],\]
is called the \textit{Nijenhuis product}.

Furthermore, $N$ is called a
\textit{Nijenhuis operator} if, for all $a,b \in T$,
\[N(a \circ_{N} b) = N(a)N(b).\]
For all $e \in T$, the \textit{$e$-Nijenhuis torsion} of $N$ is defined as
\[T^{e}_{N}(a,b) = [N(a \circ_{N} b),N(a)N(b),e].\]
\end{Definition}

Note that in view of Remark~\ref{rem.equal}, $N$ is a Nijenhuis operator if and only if its $e$-torsion is a~constant function equal to $e$, that is, for all $e,a,b\in T$, $T^{e}_{N}(a,b)=e$. Such an $e$-torsion is said to be \textit{trivial}.

\begin{Remark}
 The authors of \cite{4} use the term Nijenhuis tensor rather than Nijenhuis operator. However, as the former commonly is used to describe the obstruction of an almost complex structure to originate from a complex structure and is closer to the Nijenhuis torsion (see \cite[footnote~1, p.~627]{Kos:Nij}), we prefer the latter. Besides the term Nijenhuis operator is now widely used to describe a way of deforming of a given algebraic structure (typically a Lie bracket, but associative products too) into a structure of the same kind, which also extends to trusses as argued in the present text.
\end{Remark}

\begin{Example}\label{ex.id.proj}
 Let $T$ be a truss.
 \begin{itemize}\itemsep=0pt
 \item[(1)] The identity map $\id\colon T\to T$ is a Nijenhuis operator.
 \item[(2)] Let $P\colon T\to T$ be a multiplicative idempotent homomorphism of heaps. Then $P$ is a~Nijenhuis operator on $T$. In particular, for any idempotent element $q\in T$, the constant map $a\mapsto q$ is a~Nijenhuis operator.
 \end{itemize}
\end{Example}
\begin{proof}
 In the first case, the Nijenhuis product is the same as the original multiplication in $T$ and hence clearly the identity map is a Nijenhuis operator. In the second example, since, for all~$a,b,c\in T$,
 \[
 P([a,b,c]) = [P(a), P(b), P(c)], \qquad P(ab) = P(a)P(b), \qquad P(P(a)) =P(a),
 \]
 we can easily compute
 \begin{align*}
 P(a\circ_Pb) &= P([P(a)b, P(ab),aP(b)]) = [P(P(a)b), P(a)P(b),P(aP(b)]\\
 &= [P(a)P(b), P(a)P(b),P(a)P(b)] = P(a)P(b)
 \end{align*}
 as required.

 Any constant map is a heap homomorphism of heaps, and if the image is an idempotent element of $T$, then such a map is a multiplicative idempotent homomorphism of heaps.
\end{proof}

A direct connection between Nijenhuis operators on rings and trusses is given in the following proposition.
\begin{Proposition}\label{prop.Nij.ideal}
 Let $T(q;I)$ be a truss associated to an ideal $I$ and idempotent $q$ in a ring $R$; see Remark~{\rm \ref{rem.ideal}}. Let $\bar{N}$ be a Nijenhuis operator on $I$, such that, for all $x\in I$,
 \begin{gather}\label{n.e}
 \bar{N}(xq) = \bar{N}(x)q, \qquad \bar{N}(qx) =q\bar{N}(x).
 \end{gather}
 Then
 \[
 N\colon\ T(q;I)\to T(q;I), \qquad q+x\mapsto q+ \bar{N}(x)
 \]
 is a Nijenhuis operator on $T(q;I)$.
\end{Proposition}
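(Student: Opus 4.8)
The plan is to work entirely inside the ambient ring $R$, writing every element of $T(q;I)$ in the form $q+x$ with $x\in I$ and translating the heap operation into ring arithmetic through $[u,v,w]=u-v+w$. First I would record the basic data for $a=q+x$, $b=q+y$: one has $N(a)=q+\bar N(x)$ and $N(b)=q+\bar N(y)$, while, using idempotence $q^2=q$ together with the fact that $I$ is an ideal (so $xy,qy,xq\in I$), one gets $ab=q+(xy+qy+xq)$. Consequently $N(ab)=q+\bar N(xy+qy+xq)$ and
\[
N(a)N(b)=\big(q+\bar N(x)\big)\big(q+\bar N(y)\big)=q+\big(\bar N(x)q+q\bar N(y)+\bar N(x)\bar N(y)\big).
\]
Here I use that $\bar N$, being a Nijenhuis operator on the ring $I$ in the sense of \eqref{int.Nij}, is additive; I would also note at the outset that $N$ is a well-defined heap homomorphism of $T(q;I)$, which is immediate from additivity of $\bar N$ since $N([a,b,c])=q+\bar N(x-y+w)$ matches $[N(a),N(b),N(c)]$ for $a=q+x$, $b=q+y$, $c=q+w$.

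Next I would compute the Nijenhuis product $a\circ_N b=[N(a)b,N(ab),aN(b)]=N(a)b-N(ab)+aN(b)$. Expanding the three products in $R$ and collecting the scalar part to a single $q$, the expression takes the form $q+z$ for an explicit $z\in I$. The crucial simplification comes from rewriting, by additivity and the compatibility conditions \eqref{n.e}, the middle term as $\bar N(xy+qy+xq)=\bar N(xy)+q\bar N(y)+\bar N(x)q$. The two cross terms $\bar N(x)q$ and $q\bar N(y)$ then cancel against their negatives, leaving
\[
z=\big(\bar N(x)y-\bar N(xy)+x\bar N(y)\big)+qy+xq,
\]
in which the first bracket is exactly the Nijenhuis product of $x$ and $y$ on the ring $I$.

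Finally I would apply $\bar N$ to $z$. Using additivity once more, the conditions \eqref{n.e} on $\bar N(qy)$ and $\bar N(xq)$, and the hypothesis that $\bar N$ is a Nijenhuis operator on $I$ (so $\bar N$ of its Nijenhuis product equals $\bar N(x)\bar N(y)$), I obtain $\bar N(z)=\bar N(x)\bar N(y)+q\bar N(y)+\bar N(x)q$. Hence $N(a\circ_N b)=q+\bar N(z)$ coincides term by term with the expression for $N(a)N(b)$ found above, which is precisely the identity $N(a\circ_N b)=N(a)N(b)$ defining a Nijenhuis operator.

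I expect the only genuine obstacle to be bookkeeping: the conditions \eqref{n.e} must be invoked twice, once at the inner level to control $\bar N(xy+qy+xq)$ and once at the outer level on $\bar N(qy)$ and $\bar N(xq)$, and it is precisely the pairing of these two uses with the idempotence $q^2=q$ that produces the cancellations making the Nijenhuis product on $I$ visible. Beyond carefully organising these cancellations, no real difficulty arises, since the Nijenhuis identity for $\bar N$ is then applied verbatim.
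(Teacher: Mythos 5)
Your proposal is correct and follows essentially the same route as the paper's proof: both expand $a\circ_N b$ in the ambient ring $R$, use idempotence of $q$ and the compatibility conditions \eqref{n.e} to cancel the cross terms and reduce to $q + x\circ_{\bar N}y + xq + qy$, and then apply $\bar N$ together with \eqref{n.e} and the Nijenhuis identity on $I$ to match $N(q+x)N(q+y)$. The bookkeeping you describe, including the two separate invocations of \eqref{n.e}, is exactly what the published argument does.
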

\begin{proof}
 It is immediate from the definition of $N$ that it is a heap homomorphism. The Nijenhuis product comes out as, for all $x,y\in I$,
 \begin{align*}
 (q+x)\circ_N(q+y) &= q +\bar{N}(x)q + qy +\bar{N}(x)y - q - \bar{N}(xq) - \bar{N}(qy) - \bar{N}(xy)\\
 & \quad+ q + x \bar{N}(y) + xq +q \bar{N}(y)\\
 &= q + x\circ_{\bar{N}}y +xq +qy
 \end{align*}
 by the fact that $q$ is an idempotent and by \eqref{n.e}. Since $\bar{N}$ is the Nijenhuis operator on the ring~$I$, we conclude
 \[
 N\left((q+x)\circ_N(q+y)\right) = q + \bar{N}(x)\bar{N}(y) + \bar{N}(x)q + q\bar{N}(y) = N(q+x)N(q+y),
 \]
that is, $N$ is a Nijenhuis operator on the truss $T(I;q)$.
\end{proof}

\begin{Example}\label{ex.proj}
An explicit example of a Nijenhuis operator of the type described in Proposition~\ref{prop.Nij.ideal} can be constructed as follows. Let $R$ be the subring of the ring $M_{n\times n}(\F)$ of $(n+1)\times (n+1)$-matrices over a field $\F$ consisting of matrices of the following block form
\[
A(\mathbf{a}, \mathbf{b}, \alpha) =
\begin{pmatrix}
 \mathbf{a} & \mathbf{b}\\
 0 & \alpha
\end{pmatrix},
\]
where $\mathbf{a}\in M_{n\times n}(\F)$, $\mathbf{b}\in M_{n\times 1}(\F)$, and $\alpha \in \F$. Set
\[
I = \{A(\mathbf{a}, \mathbf{b}, 0)\;|\; \mathbf{a}\in M_{n\times n}(\F), \mathbf{b}\in M_{n\times 1}(\F)\}, \qquad q= A(0,0,1),
\]
so that
\[
T(I;q) = \{A(\mathbf{a}, \mathbf{b}, 1)\;|\; \mathbf{a}\in M_{n\times n}(\F), \mathbf{b}\in M_{n\times 1}(\F)\}.
\]
Let $P_+$ denote the projection in $M_{n\times n}(\F)$ onto the subring of upper triangular matrices. As argued in \cite[Example~1]{4}, the map
\[
\bar{N}\colon\ I\to I, \qquad A(\mathbf{a}, \mathbf{b}, 0) \mapsto A(P_+(\mathbf{a}), \mathbf{b}, 0)
\]
is a Nijenhuis operator on $I$. Since $\bar{N}$ affects only the upper $n\times n$ block of $A(\mathbf{a}, \mathbf{b}, 0)$ and in view of the general multiplication rules in~$R$,
\[
A(\mathbf{a}, \mathbf{b}, \alpha)A(\mathbf{a'}, \mathbf{b'}, \alpha') = A(\mathbf{a}\mathbf{a'}, \mathbf{a}\mathbf{b'} +\alpha' \mathbf{b}, \alpha \alpha'),
\]
the condition \eqref{n.e} is satisfied. Therefore,
\[
{N}\colon\ T(I;q) \to T(I;q), \qquad A(\mathbf{a}, \mathbf{b}, 1) \mapsto A(P_+(\mathbf{a}), \mathbf{b}, 1)
\]
is a Nijenhuis operator on $T(I;q)$.
\end{Example}

\begin{Theorem} \label{2nd}
Let $T$ be a truss and $N\colon\ T\to T$ a heap homomorphism.
The Nijenhuis product~$\circ_{N}$ is associative if and only the $e$-Nijenhuis torsion of $N$ is an $e$-relative Hochschild $2$-cocycle for all $($equivalently for any$)$ $e\in T$. If this is the case, then $T$ is a truss with the Nijenhuis product~$\circ_{N}$.
\end{Theorem}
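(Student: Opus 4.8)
The plan is to fix an element $e\in T$, pass to additive notation, and reduce associativity of $\circ_N$ to a single heap identity. Concretely, I would aim to establish, for all $a,b,c\in T$,
\[
\delta_e^2\, T_N^e(a,b,c) \;=\; \big[\,e,\ (a\circ_N b)\circ_N c,\ a\circ_N(b\circ_N c)\,\big].
\]
Granting this, Remark~\ref{rem.equal} (in the form ``$[e,X,Y]=e$ iff $X=Y$'') shows that the right-hand side equals $e$ precisely when $(a\circ_N b)\circ_N c = a\circ_N(b\circ_N c)$. Hence $\delta_e^2 T_N^e$ is the constant function $e$, i.e.\ $T_N^e$ is a $2$-cocycle, if and only if $\circ_N$ is associative, which is exactly the desired equivalence for the fixed $e$.

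\textbf{The core computation.} To expand cleanly I would embed $T=q+I$ into an associative ring $R$ as in Remark~\ref{rem.ideal}, so that the heap operation is literally $[x,y,z]=x-y+z$ and the truss product is the ring product; every element produced below ($ab$, $N(a)b$, $N(N(a)b)$, and so on) again lies in $T$, so that $N$ and $\circ_N$ apply. In this notation $a\circ_N b = N(a)b - N(ab) + aN(b)$ and $T_N^e(a,b)=N(a\circ_N b)-N(a)N(b)+e$. I would then expand both $(a\circ_N b)\circ_N c$ and $a\circ_N(b\circ_N c)$ using the definition of $\circ_N$, the fact that $N$ is a heap homomorphism (so $N([x,y,z])=N(x)-N(y)+N(z)$), and two-sided ring distributivity, crucially expanding each inner $N(x\circ_N y)$ all the way out rather than folding it into the torsion. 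I expect two pairs of terms to cancel at once between the two expansions, leaving fourteen terms in the associator $(a\circ_N b)\circ_N c - a\circ_N(b\circ_N c)$. Expanding $\delta_e^2 T_N^e(a,b,c)$ in the same additive notation, its explicit $e$-contributions collapse to a single $e$ while its remaining fourteen terms are exactly the negatives of the surviving associator terms; matching them yields the displayed identity.

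\textbf{The main obstacle.} The difficulty is entirely one of bookkeeping: both sides generate a large number of ``double-$N$'' terms such as $N(N(a)bc)$, $N(aN(bc))$, $N(N(ab)c)$ and $aN(N(b)c)$, and the identity hinges on these matching exactly, together with the mixed terms $N(a)N(bc)$, $N(ab)N(c)$ and the torsion/$e$-corrections. The one genuine insight is that one must \emph{not} simplify $N(x\circ_N y)$ through the torsion before comparing, since it is precisely its fully expanded form $N(N(x)y)-N(N(xy))+N(xN(y))$ that produces the double-$N$ terms needed on the associator side. Careful attention to signs (equivalently, to the parity of positions in the heap operation) is what makes the cancellation transparent.

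\textbf{The remaining clauses.} Since associativity of $\circ_N$ makes no reference to $e$, and the argument above proves for \emph{every} $e$ that associativity is equivalent to $T_N^e$ being a $2$-cocycle, all these cocycle conditions are equivalent to one another; hence checking one $e$ suffices, giving the ``for all (equivalently for any)'' statement. (This can also be read off from Lemmas~\ref{complem} and~\ref{lem.b.z}, which transport cocycles between base points.) For the closing assertion, once $\circ_N$ is associative it remains only to verify that it distributes over the heap operation, and this is immediate from $N$ being a heap homomorphism together with the truss distributive laws: expanding $a\circ_N[b,c,d]$ and $[a\circ_N b,\,a\circ_N c,\,a\circ_N d]$ produces the same nine terms with matching signs (so they agree, by Remark~\ref{rem.rules}), and symmetrically on the other side. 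Thus $(T,[---],\circ_N)$ is again a truss.
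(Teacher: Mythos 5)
Your proposal is correct and follows essentially the same route as the paper: both establish the identity $\delta_e^2 T_N^e(a,b,c)=[a\circ_N(b\circ_N c),\,(a\circ_N b)\circ_N c,\,e]$ by fully expanding both associations and the coboundary (with the same two cancelling pairs, fourteen surviving terms, and $e$-contributions collapsing to a single $e$), conclude via Remark~\ref{rem.equal}, and verify distributivity of $\circ_N$ separately. The only difference is presentational: you linearise the bookkeeping by embedding $T$ into a ring via Remark~\ref{rem.ideal}, whereas the paper manipulates the heap brackets directly using the rearrangement rules of Remark~\ref{rem.rules}.
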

\begin{proof}
First, note that whether associative of not, the Nijenhuis product distributes over the heap operation. Indeed, for all $e,x,y,z\in T$,
\begin{align*}
w \circ_{N}[x,y,z]
&=\left[N(w)[x,y,z],N(w[x,y,z]),wN([x,y,z])\right] \\
&= \left[N(w)x,N(w)y ,N(w)z, N(wx),N(wy),N(wz), wN(x) ,wN(y) , wN(z)\right]\\
 &= \left[N(w)x,N(wx),wN(x), N(w)y,N(wy),wN(y), N(w)z,N(wz),wN(z)\right] \\
&=
[w \circ_{N}x,w \circ_{N}y,w \circ_{N}z],
\end{align*}
since $T$ is a truss and $N$ is a heap homomorphism. The rearrangement of terms leading to the third equality follows by the fact that $T$ is an abelian heap and hence the terms in odd (resp.\ even) positions in the square bracket can be reshuffled freely; see Remark~\ref{rem.rules}. This proves the left distributive law. The right distributive law is proven by similar calculations.

Using the heap homomorphism property of $N$, the associativity of the product in $T$ as well as its distributivity over the heap operation, we find, for all $a,b,c\in T$,
\begin{align*}
(a\circ_{N}b)\circ_{N}c ={}& [N(N(a)b)c, N(N(ab))c, N(aN(b))c, N(N(a)bc), \\
& N(N(ab)c), N(aN(b)c), N(a)bN(c),N(ab)N(c),aN(b)N(c)],
\end{align*}
and
\begin{align*}
a\circ_{N}(b\circ_{N}c) ={}&
[ N(a)N(b)c, N(a)N(bc) , N(a)bN(c),
 N(aN(b)c), \\
 & N(aN(bc)), N(abN(c)),
 aN(N(b)c),aN(N(bc)), aN(bN(c))].
\end{align*}
Therefore, cancelling repeated terms we obtain
\begin{align*}
[a \circ_{N} (b \circ_{N} c) &, (a \circ_{N}b) \circ_{N}c, e] = [N(a)N(b)c ,N(a)N(bc), N(aN(bc)), N(abN(c)), \\
& aN(N(b)c),aN(N(bc)),
 aN(bN(c)), N(N(a)b)c , N(N(ab))c, \\
&N(aN(b))c , N(N(a)bc) , N(N(ab)c),
 N(ab)N(c) , aN(b)N(c) ,e].
\end{align*}
Next, using the truss distributive laws, we compute
\begin{align*}
&aT^{e}_{N}(b,c) = [aN(N(b)c) , aN(N(bc)) ,aN(bN(c)) , aN(b)N(c) , ae],\\
&T^{e}_{N}(ab,c) = [N(N(ab)c) , N(N(abc)) ,N(abN(c)) , N(ab)N(c) , e],\\
&T^{e}_{N}(a,bc) = [N(N(a)bc) , N(N(abc)) , N(aN(bc)) , N(a)N(bc) , e],
\end{align*}
and
\begin{align*}
T^{e}_{N}(a,b)c
&= [N(N(a)b)c , N(N(ab))c , N(aN(b))c , N(a)N(b)c , ec].
\end{align*}
Thus,
\begin{align*}
\delta^2_e T^{e}_{N}(a,b,c) ={}& [aN(N(b)c), aN(N(bc)) , aN(bN(c)) , aN(b)N(c) , ae, \\
& N(N(ab)c) , N(N(abc)) , N(abN(c)) , N(ab)N(c) , e, \\
& N(N(a)bc) , N(N(abc)) , N(aN(bc)) , N(a)N(bc) , e, \\
& N(N(a)b)c , N(N(ab))c , N(aN(b))c , N(a)N(b)c , ec,
ec ,ae,e].
\end{align*}
We can then perform cancellations and rearrangements allowed by the definition of an abelian heap (see Remark~\ref{rem.rules}), yielding
\[
[a \circ_{N} (b \circ_{N} c), (a \circ_{N}b) \circ_{N}c, e] = \delta^2_e T^{e}_{N}(a,b,c).
\]
Therefore, by Remark~\ref{rem.equal},
\[
a \circ_{N} (b \circ_{N} c) = (a \circ_{N}b) \circ_{N}c,
\]
if and only if $\delta^2_e T^{e}_{N}(a,b,c)=e$ as required.
\end{proof}

\begin{Corollary}\label{cor.Nij}
If $N$ is a Nijenhuis operator on a truss $(T,[---],\cdot)$, it follows that $(T,[---],\circ_{N})$ is also a truss. We denote this truss by $T[N]$.
\end{Corollary}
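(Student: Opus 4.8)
The plan is to reduce everything to Theorem~\ref{2nd}, since most of the work has already been carried out there. To exhibit $(T,[---],\circ_N)$ as a truss I must check three things: that $(T,[---])$ is an abelian heap, that $\circ_N$ is associative, and that $\circ_N$ distributes over $[---]$ on both sides. The first is immediate, as the heap structure is untouched by the change of multiplication. The two distributive laws require no associativity and were in fact already established at the very beginning of the proof of Theorem~\ref{2nd}, where it is shown that $w\circ_N[x,y,z]=[w\circ_N x, w\circ_N y, w\circ_N z]$ (and symmetrically on the other side) using only that $T$ is a truss and that $N$ is a heap homomorphism. So the sole remaining point is the associativity of $\circ_N$.

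For associativity I would invoke the characterization recorded just after Definition~\ref{def.Nij.tor}: $N$ is a Nijenhuis operator precisely when its $e$-torsion is trivial, that is, $T^e_N(a,b)=e$ for all $a,b\in T$, so that $T^e_N$ is the constant function with value $e$. By Theorem~\ref{deltathme} this constant function satisfies $\delta^2_e(e)=e$, hence it lies in $Z^2_e(T)$ and is an $e$-relative Hochschild $2$-cocycle. Theorem~\ref{2nd} then applies verbatim: since the $e$-Nijenhuis torsion of $N$ is a $2$-cocycle for every (equivalently, any) $e\in T$, the Nijenhuis product $\circ_N$ is associative, and consequently $(T,[---],\circ_N)$ is a truss.

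There is essentially no hard step here; all of the analytic content has been front-loaded into Theorems~\ref{deltathme} and~\ref{2nd}. The only subtlety worth flagging is the need to confirm that the trivial torsion genuinely qualifies as a cocycle, which is exactly the computation $\delta^2_e(e)=e$ of Theorem~\ref{deltathme}, together with the observation that the constant map $e\colon T^2\to T$ is a bi-heap homomorphism and hence a legitimate member of $C^2(T)$. Once these bookkeeping points are in place, the corollary follows immediately, and the resulting truss may be denoted $T[N]$.
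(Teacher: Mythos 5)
Your proof is correct and follows essentially the same route as the paper, which likewise deduces the corollary from Theorem~\ref{2nd} by noting that the trivial $e$-torsion is a $2$-cocycle. Your additional remarks (that $\delta^2_e(e)=e$ by Theorem~\ref{deltathme} and that distributivity was already established in the proof of Theorem~\ref{2nd}) simply make explicit what the paper leaves implicit.
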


\begin{proof}
By the definition of a Nijenhuis operator, its $e$-torsion is trivial for all $e$, and hence it is a 2-cocycle as needed.
\end{proof}

As an illustration of Theorem~\ref{2nd} we classify all Nijenhuis operators on $T(\Z;a,b,c)$ on $\Z$ described in Remark~\ref{rem.trusses}.

\begin{Proposition}\label{prop.Nij.z}
The following table lists all Nijenhuis operators on commutative trusses $T(\Z;a,b,c)$ on $\Z$ described in Remark~{\rm \ref{rem.trusses}}.
 \begin{center}\renewcommand{\arraystretch}{1.8}
 \begin{tabular}{| c | c c |}
\hline
 $T(\Z;a,b,c)$ & {\rm Nijenhuis operators} &
 \\
\hline $c\neq 0$
 & $N(m) = \left(\dfrac{b}{\gcd(b,c)} q +1\right)m + \dfrac{c}{\gcd(b,c)} q,$
 &
 \\[1ex]
& $N(m) = \left(\dfrac{b-1}{\gcd(b-1,c)} q +1\right)m + \dfrac{c}{\gcd(b-1,c)} q,$ & $q\in \Z$ \\
\hline $c=0$
 & $N(m) = qm,$
 & {}
 \\
{} & $N(m)= \left(\dfrac{a}{2b-1} q +1\right) m +q $,
 & $q\in \Z$ \\ \hline
 \end{tabular}
 \end{center}
 \end{Proposition}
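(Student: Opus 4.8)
The plan is to reduce the classification to a single scalar equation and then to solve that equation over $\Z$ by exploiting the defining constraint \eqref{truss.z.constraint}. First I would pin down the candidate operators. Since the heap structure underlying $\Z$ is $[k,l,m]=k-l+m$, a map $N\colon\Z\to\Z$ is a heap homomorphism exactly when $m\mapsto N(m)-N(0)$ is additive; hence every heap endomorphism of $\Z$ is affine, $N(m)=\alpha m+\beta$ with $\alpha=N(1)-N(0)$ and $\beta=N(0)$. Thus the problem becomes: for each admissible triple $(a,b,c)$, determine the pairs $(\alpha,\beta)\in\Z^2$ for which $N$ is a Nijenhuis operator. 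By Definition~\ref{def.Nij.tor} together with Remark~\ref{rem.equal}, this happens precisely when the torsion is trivial, i.e.\ when $N(x\circ_N y)=N(x)N(y)$ for all $x,y\in\Z$.

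Next I would expand both sides as polynomials in $x,y$ using the multiplication \eqref{truss.z}. A direct computation of the Nijenhuis product yields
\[
x\circ_N y=[N(x)y,\,N(xy),\,xN(y)]=a\alpha\,xy+(a\beta+b)(x+y)+(2b\beta+2c-\alpha c-\beta),
\]
so that $N(x\circ_N y)=\alpha(x\circ_N y)+\beta$ and $N(x)N(y)$ (the truss product of $\alpha x+\beta$ and $\alpha y+\beta$) are both quadratics in $x,y$. Comparing coefficients, the $xy$-terms match automatically, and --- conveniently --- so do the coefficients of $x$ and of $y$; the whole Nijenhuis condition is carried by the constant terms, which collapse to the single Diophantine equation
\[
\beta(\alpha-1)(2b-1)=c(\alpha-1)^2+a\beta^2.
\]
I expect the lengthy but routine coefficient bookkeeping here to be the only spot demanding care with signs.

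The decisive step is to solve this equation over $\Z$. Setting $u=\alpha-1$ and $v=\beta$, it reads $cu^2-(2b-1)uv+av^2=0$, the vanishing of a binary quadratic form. Viewed as a quadratic in $u$, its discriminant is $\big((2b-1)^2-4ac\big)v^2$, and here the constraint \eqref{truss.z.constraint}, $ac=b(b-1)$, does the essential work: $(2b-1)^2-4b(b-1)=1$, so the discriminant is the perfect square $v^2$ and the form factors. Indeed, multiplying the equation by $c$ and substituting $ac=b(b-1)$ gives
\[
\big(c(\alpha-1)-b\beta\big)\big(c(\alpha-1)-(b-1)\beta\big)=0,
\]
so for $c\neq0$ the solution set is the union of the two lines $c(\alpha-1)=b\beta$ and $c(\alpha-1)=(b-1)\beta$. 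Parametrising the integer points on the first line by dividing through by $\gcd(b,c)$ and using $\gcd\!\big(\tfrac{b}{\gcd(b,c)},\tfrac{c}{\gcd(b,c)}\big)=1$ to force $\beta=\tfrac{c}{\gcd(b,c)}q$, $\alpha-1=\tfrac{b}{\gcd(b,c)}q$, reproduces the first family in the table; the second line, treated identically with $\gcd(b-1,c)$, gives the second. For $c=0$ the constraint forces $b(b-1)=0$, hence $b\in\{0,1\}$ and $2b-1=\pm1$; the equation becomes $\beta\big((\alpha-1)(2b-1)-a\beta\big)=0$, so either $\beta=0$ (the family $N(m)=\alpha m$, relabelling $\alpha$ as the free parameter $q$) or $\alpha-1=\tfrac{a}{2b-1}\beta$, which is automatically integral since $2b-1=\pm1$, giving the remaining $c=0$ family.

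The main obstacle is exactly this passage from the scalar quadratic condition to a clean enumeration. A priori it looks like an awkward inhomogeneous-looking Diophantine equation in $(\alpha,\beta)$; the crucial realisation is that the truss constraint \eqref{truss.z.constraint} is precisely what makes the associated quadratic form have perfect-square discriminant, so that it splits into two linear factors whose integer solutions are then routine to list by elementary gcd arguments. Everything else --- the identification of heap homomorphisms, the coefficient comparison, and the final parametrisations --- is mechanical once this is in hand.
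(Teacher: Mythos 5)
Your proposal is correct and follows essentially the same route as the paper: reduce to affine maps $N(m)=pm+q$, observe that the Nijenhuis condition collapses to a single constant (i.e., $(m,n)$-independent) quadratic relation between the two parameters, and exploit the constraint $ac=b(b-1)$, which makes the discriminant $(2b-1)^2-4ac=1$ a perfect square, before splitting into the cases $c\neq 0$ and $c=0$. The only difference is cosmetic: you factor the homogeneous form $cu^2-(2b-1)uv+av^2$ into two integer linear forms and run an explicit $\gcd$ argument, whereas the paper applies the quadratic formula in $p$ and then imposes integrality; your version makes the enumeration of integer solutions slightly more transparent but rests on the same key observation.
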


\begin{proof}
A heap homomorphism $N\colon \Z\to \Z$ is necessarily of the form, for all $m\in \Z$,
 \begin{gather*}
 N(m) = pm +q
 \end{gather*}
for some $p,q\in \Z$. We need to determine what conditions $p$ and $q$ have to satisfy in order for $N$ to be a Nijenhuis operator. We take the most general commutative truss on $\Z$, $T(\Z;a,b,c)$, with multiplication \eqref{truss.z}, where $a$, $b$, $c$ satisfy the constraint \eqref{truss.z.constraint}, and compute, for all $m,n\in \Z$,
\begin{align*}
 &N(m)\cdot n = apmn +(aq+b)n +bpm +bq +c,\\
 &N(m\cdot n) = apmn +bpm +bpn +cp +q.
\end{align*}
These yield the Nijenhuis product
\begin{gather}\label{nij.prod.z}
m\circ_N n = apmn + (aq+b)(m+n) +2bq +2c -q -cp.
\end{gather}
In view of Theorem~\ref{2nd}, the product \eqref{nij.prod.z} if associative if and only if the $e$-Nijenhuis torsion is a cocycle for any fixed $e\in \Z$, in particular, for~0. The $0$-Nijenhuis torsion comes out as
\begin{equation}\label{0.torsion}
 T_N^0(m,n)= -cp^2+ ((2b-1)q +2c)p - aq^2 - (2b-1)q -c.
\end{equation}
On the other hand, as recalled in Remark~\ref{rem.trusses}, the product \eqref{nij.prod.z} is associative if and only if
\begin{equation}\label{z.constraint}
 2 abpq + 2acp - apq-acp^2 = (aq+b)(aq+b-1).
\end{equation}
Since $ac=b(b-1)$ and in view of \eqref{0.torsion}, this can be rewritten as
\begin{gather*}\label{constraint}
 aT_N^0(m,n)=0.
\end{gather*}
Thus, if $a\neq 0$, the associativity of the Nijenhuis product is equivalent to the vanishing of the Nijenhuis torsion, i.e., to $N$ being a Nijenhuis operator.

We consider the equation $T_N^0(m,n)=0$ as an equation with the unknown $p$. If $c\neq 0$ this is a quadratic equation with the discriminant $q^2$, and hence the solutions are
\[
p= \left(\frac{b}{c} q +1\right)\qquad \mbox{or}\qquad p= \left(\frac{b-1}{c} q +1\right).
\]
To ensure that the solutions are integer, the numbers $q$ must be multiples of the denominators of the fractions divided by the greatest common multiple of the numerator and the denominator. By rescaling accordingly, we obtain the following Nijenhuis operators:
\[
N(m) = \left(\frac{b}{\gcd(b,c)} q +1\right)m + \frac{c}{\gcd(b,c)} q, \qquad q\in \Z,
\]
and
\[
N(m) = \left(\frac{b-1}{\gcd(b-1,c)} q +1\right)m + \frac{c}{\gcd(b-1,c)} q, \qquad q\in \Z.
\]
If $c=0$, then $T_N^0(m,n)=0$ is equivalent to
\[
(2b-1)pq - aq^2 - (2b-1)q=0.
\]
Thus, $q=0$ or $p=\frac{a}{2b-1}\, q +1$, which is an integer, for all $q$, since the constraint \eqref{z.constraint} implies that $b=0$ or $b=1$ in this case.

Putting all these cases together, we obtain the table as stated.
\end{proof}

The following theorem is the truss version of \cite[Theorem~2]{4}.
\begin{Theorem}\label{thm.Nij.iter}
If $N$ is a Nijenhuis operator on a truss $T$, then, for all $j,k\in \N$,
\begin{itemize}\itemsep=0pt
\item[$(a)$] $N^k$ is a Nijenhuis operator on $T$ and hence $T\big[N^k\big]$ is a truss,
\item[$(b)$] $T\big[N^k\big]\big[N^l\big] = T\big[N^{k+l}\big]$,
\item[$(c)$] $N^l$ is a Nijenhuis operator on $T\big[N^k\big]$,
\end{itemize}
where $N^k$ means the $k$-fold composition of $N$, and $N^0=\id$.
\end{Theorem}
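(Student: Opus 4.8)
The plan is to reduce all three parts to a single generalized Nijenhuis identity and then prove that identity by induction. Throughout I would work in a retract $G(T;e)$, writing the heap operation as $+,-$ as in the proofs of Theorems~\ref{deltathme} and~\ref{2nd}; this is legitimate because every $N^k$ is a heap homomorphism and so distributes over $[---]$. Writing the original product as juxtaposition, the key claim is the identity
\[
Q(k,l)\colon\qquad N^k\big(N^l(a)b\big)-N^{k+l}(ab)+N^l\big(aN^k(b)\big)=N^l(a)N^k(b)
\]
for all $a,b\in T$ and all $k,l\ge 0$; in base-point-free form this reads $\big[N^k(N^l(a)b),\,N^{k+l}(ab),\,N^l(aN^k(b))\big]=N^l(a)N^k(b)$. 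The case $Q(1,1)$ is exactly the Nijenhuis condition $N(a\circ_N b)=N(a)N(b)$ after using that $N$ preserves $[---]$, so it holds by hypothesis.

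First I would prove $Q(k,l)$ by a double induction. For $k=1$, I induct on $l$: writing $N^{l+1}(a)=N^l(N(a))$, applying $Q(1,l)$ with $a$ replaced by $N(a)$, and then expanding the resulting term $N^{l+1}(N(a)b)$ via the Nijenhuis condition $Q(1,1)$ together with additivity of $N^l$, one obtains a telescoping cancellation that yields $Q(1,l+1)$. For the general step I fix as hypothesis ``$Q(k,\cdot)$ for every second index'' together with the already-established ``$Q(1,\cdot)$'', and prove $Q(k+1,l)$: expanding $N^{k+1}(N^l(a)b)=N\big(N^k(N^l(a)b)\big)$ by $Q(k,l)$, and expanding $N^l\big(aN^{k+1}(b)\big)$ by $Q(k,l)$ with $b$ replaced by $N(b)$, reduces $Q(k+1,l)$ to a residual whose four terms cancel in pairs once one applies $Q(1,l)$ and $Q(k,l)$ a second time. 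The delicate point is bookkeeping the powers correctly (for instance $N(u)N^{k+1}(v)$ versus $N(u)N^{k}(v)$), since a single miscount destroys the cancellation; in heap language the same computation requires tracking parities of positions as in Remark~\ref{rem.rules}.

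Granting $Q(k,l)$, the three parts follow by short computations. For $(a)$ I expand $N^k(a\circ_{N^k}b)=N^k\big(N^k(a)b\big)-N^{2k}(ab)+N^k\big(aN^k(b)\big)$ and apply $Q(k,k)$ to get $N^k(a\circ_{N^k}b)=N^k(a)N^k(b)$, so $N^k$ is a Nijenhuis operator and $T[N^k]$ is a truss by Corollary~\ref{cor.Nij}. For $(b)$ I compute the $N^l$-Nijenhuis product of $\circ_{N^k}$ directly from Definition~\ref{def.Nij.tor}; after expanding the nine resulting terms, it agrees with $\circ_{N^{k+l}}$ because their difference is governed by the combination $Q(k,l)+Q(l,k)$, which vanishes, giving $T[N^k]\big[N^l\big]=T\big[N^{k+l}\big]$. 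For $(c)$, using $(b)$ to rewrite the $N^l$-Nijenhuis product on $T[N^k]$ as $\circ_{N^{k+l}}$, the required equality $N^l\big(a\circ_{N^{k+l}}b\big)=N^l(a)\circ_{N^k}N^l(b)$ reduces, via $Q(l,k+l)$, $Q(k+l,l)$ and $N^k$ applied to $Q(l,l)$, to an identity all of whose terms match; hence $N^l$ is a Nijenhuis operator on $T[N^k]$, and $T[N^k]\big[N^l\big]$ is again a truss.

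The main obstacle is the identity $Q(k,l)$ itself; everything else is formal once it is available. The difficulty is twofold. First, the inductive scheme must be chosen carefully: a naive single induction does not close, and the statement has to be carried simultaneously in the free index so that the cancellations telescope. Second, because $T$ is only a heap with no ambient zero, each step must ultimately be justified through the ternary operation with careful parity control as in Remark~\ref{rem.rules}; the retract computations above are a convenient shorthand whose validity is independent of the base point $e$, precisely because the statements $Q(k,l)$ are base-point-free.
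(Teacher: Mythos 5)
Your proposal is correct, and while it is close in spirit to the paper's argument it is organised around a genuinely more general key lemma. The paper only establishes the two one-parameter families of identities \eqref{ka}, \eqref{kb} and \eqref{k+1} (your $Q(1,k)$ and $Q(k,1)$) by a single induction on $k$ from the Nijenhuis condition; it then proves the separate identity \eqref{nkl}, $N^l(a\circ_{N^{k+l}}b)=N^l(a)\circ_{N^k}N^l(b)$, by another induction (whose $k=0$ case gives part~$(a)$), obtains part~$(b)$ by yet another induction on $l$ whose base case is a direct nine-term computation using \eqref{k+1}, and deduces~$(c)$ formally from~$(b)$ and \eqref{nkl}. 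You instead prove the full two-parameter identity $Q(k,l)$ up front; I checked that your double induction closes (for instance, applying $Q(1,l)$ with $b$ replaced by $N^k(b)$ and then $N$ applied to $Q(k,l)$ yields $Q(k+1,l)$ after a single cancellation), and that your reductions are exactly right: $(a)$ is $Q(k,k)$, the discrepancy in $(b)$ is the sum of $Q(k,l)$ and $Q(l,k)$ and hence trivial, and $(c)$ follows from $Q(l,k+l)$, $Q(k+l,l)$ and $N^k$ applied to $Q(l,l)$. What your route buys is that no secondary inductions are needed for $(b)$ and $(c)$ -- everything after the key lemma is a one-line substitution -- at the cost of a heavier double induction at the start; the paper's \eqref{nkl} is in fact equivalent to the combination of instances of $Q$ you use for $(c)$, so the two key lemmas are interderivable. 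Your use of retract notation $+,-$ is legitimate for the same reason as in the paper's own proofs: every identity involved is an odd-length bracket equation in an abelian heap and hence independent of the base point.
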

\begin{proof}
To simplify the notation we will write $\nij k$ for the product $\circ_{N^k}$ in $T\big[N^k\big]$. Obviously, $\nij 0 =\cdot$, the original multiplication in $T$.

First, we prove that for all $a,b\in T$, $k\in \N$,
\begin{subequations}\label{k-power}
\begin{align}
&N^k(a)N(b) = \big[N(N^k(a)b),N^{k+1}(ab),N^k(aN(b))\big],\label{ka}\\
&N(a)N^k(b) = \big[N(aN^k(b)),N^{k+1}(ab),N^k(N(a)b)\big],\label{kb}\\
&N^{k+1}(ab) = \big[N(N^k(a)b),N^k(a)N(b),N^k(aN(b))\big],\nonumber\\
&\hphantom{N^{k+1}(ab)}{} = \big[N^k(N(a)b),N(a)N^k(b) ,N\big(aN^k(b)\big)\big].\label{k+1}
\end{align}
\end{subequations}
We will prove equality \eqref{ka} by induction. The equality \eqref{kb} can be proven symmetrically, while \eqref{k+1} is an equivalent restatement of \eqref{ka} and \eqref{kb}; see Remark~\ref{rem.equal}.

For $k=0$, \eqref{ka} is automatically satisfied, while for $k=1$ this is the definition of a Nijenhuis operator. Assume that \eqref{ka} is true for $k$, then first, using the Nijenhuis condition and then the inductive assumption, we obtain
\begin{align*}
\begin{split}
 N^{k+1}(a)N(b) &= N\big(\big[N^{k+1}(a)b, N(N^k(a)b), N^k(a)N(b)\big]\big)\\
 &= N\big(\big[N^{k+1}(a)b, N(N^k(a)b), N(N^k(a)b),N^{k+1}(ab),N^k(aN(b))\big]\big)\\
 &= \big[N^{k+2}(a)b,N^{k+2}(ab),N^{k+1}(aN(b))\big].
\end{split}
\end{align*}
The final equality follows by the Mal'cev identity (the cancellation rule in the heap operation) and the fact that $N$ is a heap homomorphism. Therefore, \eqref{ka} is true for all natural $k$ by the principle of mathematical induction.

Using equations \eqref{k-power}, we can compute
\begin{align*}
 N(a)\nij k N(b) &= \big[N^{k+1}(a)N(b), N^{k}(N(a)N(b)), N(a)N^{k+1}(b)\big]\\
 &= \big[N(N^{k+1}(a)b),N^{k+2}(ab),N^{k+1}(aN(b)),N^{k+1}(aN(b)), N^{k+2}(ab)\\
 &\quad\ N^{k+1}(N(a)b),N\big(aN^{k+1}(b)\big),N^{k+2}(ab),N^{k+1}(N(a)b) \big]\\
 &= \big[N\big(N^{k+1}(a)b\big), N^{k+2}(ab),N\big(aN^{k+1}(b)\big) \big]
 = N(a\nij{k+1}b).
\end{align*}
The penultimate equality follows by the cancellation rules for an abelian heap operation. The last equality is a consequence of the fact that $N$ is a heap homomorphism and the definition of the Nijenhuis product. Starting with this, we can employ the inductive argument to prove that, for all $a,b\in T$, $k,l\in \N$,
\begin{equation}\label{nkl}
N^l(a\nij{k+l} b) = N^l(a)\nij k N^l(b).
\end{equation}
In particular, the case $k=0$ yields assertion (a).

The second assertion is also proved inductively on $l$. The inductive step is straightforward, so only the case $l=1$ needs to be proven. For all $a,b\in T$,
\begin{align*}
 [N(a)\nij k b, N(a\nij k b), a\nij k N(b)] &= \big[N^{k+1}(a)b, N^k(N(a)b), N(a)N^k(b),N(N^k(a)b)\\
 &\quad\ N^{k+1}(ab), N(aN^k(b)), N^k(a)N(b), N^k(aN(b)), aN^{k+1}(b)\big],\\
 &= \big[N^{k+1}(a)b, N^{k+1}(ab), N^{k+1}(ab),N^{k+1}(ab), aN^{k+1}(b)\big], \\
 &= \big[N^{k+1}(a)b,N^{k+1}(ab), aN^{k+1}(b)\big] = a\nij{k+1}b.
\end{align*}
The second equality follows by \eqref{k+1}. This completes the proof of statement (b). The last assertion follows immediately from (b) and \eqref{nkl}.
\end{proof}

Following \cite{4}, we propose:
\begin{Definition}\label{def.compat}
 Nijenhuis operators $N_1$, $N_2$ on a truss $T$ are said to be \textit{compatible} if, for all $a,b\in T$,
 \begin{gather*}
 N_1(a)N_2(b) = [ N_1(a\circ_{N_2}b),N_2(a)N_1(b), N_2(a\circ_{N_1}b) ].
 \end{gather*}
\end{Definition}
\begin{Example}\label{ex.com.id}
 The identity operator $\id$ on $T$ is compatible with any Nijenhuis operator on $T$.
\end{Example}
The next statement is the truss version of \cite[Theorems~3 and~4]{4}.
\begin{Theorem}\label{thm.comp}
 Let $T$ be a truss.
 \begin{itemize}\itemsep=0pt
 \item[$(1)$] If $N_1, N_2,\dots,N_{2n+1}$ are pairwise compatible Nijenhuis operators on $T$, then
 \[
 [N_1,N_2,\dots, N_{2n+1}]
 \]
 is a Nijenhuis operator on $T$.
 \item[$(2)$] For all Nijenhuis operators $N$ on $T$:
 \begin{itemize}\itemsep=0pt
 \item[$(a)$] the operators $N^k$ and $N^l$ are compatible, for all $k,l\in \N$,
 \item[$(b)$] $\big[N^{k_1},N^{k_2},\dots, N^{k_{2n+1}}\big]$ is a Nijenhuis operator for all $k_i,n\in \N$.
 \end{itemize}
 \end{itemize}
\end{Theorem}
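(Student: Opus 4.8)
The plan is to deduce the whole statement from Theorem~\ref{thm.Nij.iter}---in particular its identity~\eqref{ka}---together with the ring embedding of Remark~\ref{rem.ideal}, with part~(1) the substantial step: part~(2)(b) then reduces to~(1), while part~(2)(a) is handled separately via an auxiliary identity.

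For part~(1), I would first note that $N:=[N_1,\dots,N_{2n+1}]$ is a heap homomorphism (a pointwise heap combination of heap homomorphisms), so only $N(a\circ_N b)=N(a)N(b)$ needs checking. To tame the bookkeeping I would work inside an associative ring $R$ in which $T$ is embedded as $q+I$ (Remark~\ref{rem.ideal}); there the heap operation is $a-b+c$ and the truss product is the product of $R$, so the product becomes genuinely bilinear. Writing $\eps_i=(-1)^{i+1}$, so that $N(x)=\sum_i\eps_i N_i(x)$ in $R$, bilinearity gives
\[
N(a)N(b)=\sum_{i,k}\eps_i\eps_k\,N_i(a)N_k(b).
\]
On the other side, each $N_k$ is a heap homomorphism and hence distributes over the genuine heap combinations $N(a)b$, $N(ab)$, $aN(b)$ (each an odd-length bracket of elements of $T$), so expanding $a\circ_N b=[N(a)b,N(ab),aN(b)]$ I would obtain
\[
N(a\circ_N b)=\sum_{i,k}\eps_i\eps_k\bigl[N_k(N_i(a)b)-N_k(N_i(ab))+N_k(aN_i(b))\bigr].
\]
The final step splits this double sum into diagonal and off-diagonal parts: on the diagonal $i=k$ the bracket is $N_i(a\circ_{N_i}b)=N_i(a)N_i(b)$ since $N_i$ is Nijenhuis; pairing the off-diagonal terms $(i,k)$ and $(k,i)$, the sum of the two brackets is exactly the right-hand side of the compatibility condition of Definition~\ref{def.compat} written in $R$, hence equals $N_i(a)N_k(b)+N_k(a)N_i(b)$. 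Reassembling shows the two double sums coincide.

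For part~(2)(a) I would first prove, by induction on $l$ with $k$ fixed, the auxiliary identity
\[
N^k(a)N^l(b)=\bigl[N^l(N^k(a)b),N^{k+l}(ab),N^k(aN^l(b))\bigr],
\]
whose base case $l=1$ is \eqref{ka}; the inductive step applies \eqref{ka} to the pair $(a,N^l(b))$ and then the inductive hypothesis inside the outer $N$, after which a Mal'cev cancellation collapses the bracket. Feeding this identity together with its $k\leftrightarrow l$ swap into the ring form of the compatibility condition of Definition~\ref{def.compat} then yields compatibility of $N^k$ and $N^l$ directly. Part~(2)(b) is then immediate: by Theorem~\ref{thm.Nij.iter}(a) each $N^{k_i}$ is a Nijenhuis operator and by part~(2)(a) they are pairwise compatible, so part~(1) applies to $[N^{k_1},\dots,N^{k_{2n+1}}]$.

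I expect the main obstacle to be the index bookkeeping in part~(1): one must track signs carefully, verify that each $N_k$ is applied only to honest heap combinations (so that distributivity of a heap homomorphism is legitimate and no element leaves $T$), and match the paired off-diagonal terms precisely with the compatibility identity. The ring embedding is what makes this feasible, converting the twisted distributivity of the retract $G(T;e)$ into ordinary bilinearity; without it the correction terms coming from $a(b+c)=ab-ae+ac$ render the same computation considerably more delicate.
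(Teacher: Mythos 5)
Your proposal is correct, but it reaches the result by a genuinely different route, chiefly in part (1). The paper first records the identity \eqref{n1n2n3}, $a\circ_{[N_1,\dots,N_{2n+1}]}b=[a\circ_{N_1}b,\dots,a\circ_{N_{2n+1}}b]$, and then argues by induction on $n$: the base case $n=1$ is an explicit nine-term bracket manipulation using pairwise compatibility, and the inductive step observes that $N_{2k}$ and $N_{2k+1}$ are each compatible with $[N_1,\dots,N_{2k-1}]$, reducing everything to the three-operator case. You instead handle general $n$ in one stroke by passing to a ring embedding $T=q+I\subseteq R$ (Remark~\ref{rem.ideal}), expanding both sides as double sums over $(i,k)$ and matching the diagonal (the Nijenhuis condition for each $N_i$) against the symmetrised off-diagonal pairs (the compatibility condition); the signs $\eps_i\eps_k$ do work out, and since equality in $R$ restricts to equality in $T$ the argument is sound. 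What the embedding buys you is ordinary bilinearity in place of the retract's twisted distributivity; what it costs is an appeal to the (nontrivial, cited) embedding theorem, whereas every combination you form is in fact an odd-length heap bracket, so the same double-sum computation could be run intrinsically in the heap language -- essentially a closed-form version of the paper's induction. For (2)(a) the paper assumes without loss of generality $k\geq l$ and uses \eqref{nkl} to rewrite $N^k(a\nij l b)$ and $N^l(a\nij k b)$ in terms of the product $\nij{k-l}$, cancelling down to $N^k(a)N^l(b)$; your auxiliary identity $N^k(a)N^l(b)=\bigl[N^l(N^k(a)b),N^{k+l}(ab),N^k(aN^l(b))\bigr]$, proved by induction on $l$ from \eqref{ka} exactly as you describe (the Mal'cev collapse in the inductive step checks out), is a symmetric alternative that avoids the case split and feeds directly into Definition~\ref{def.compat}. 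Part (2)(b) is identical in both treatments.
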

\begin{proof}
 (1) We first note that, since the multiplication in the truss distributes over the heap operation and abelian heaps satisfy the rearrangement rules described in Remark~\ref{rem.rules}, for any heap homomorphisms $N_1, N_2,\dots ,N_{2n+1}\colon T\to T$ and all $a,b\in T$,
 \begin{gather}\label{n1n2n3}
 a\circ_{[N_1,N_2,\dots, N_{2n+1}]} b = [ a\circ_{N_1} b, a\circ_{N_2} b,\dots, a\circ_{N_{2n+1}} b].
 \end{gather}
 With \eqref{n1n2n3} at hand, we can prove the statement by induction on $n$. For $n=1$,
 \begin{align*}
 [N_1,N_2,N_3](a)\ &[N_1,N_2,N_3](b) = [N_1(a)N_1(b), N_1(a)N_2(b), N_1(a)N_3(b),
 N_2(a)N_1(b), \\ & N_2(a)N_2(b),
 N_2(a)N_3(b),
 N_3(a)N_1(b), N_3(a)N_2(b), N_3(a)N_3(b)]\\
 &= [N_1(a\circ_{N_1}b), N_1(a\circ_{N_2}b),N_2(a)N_1(b), N_2(a\circ_{N_1}b),
 N_1(a\circ_{N_3}b),\\
 &\quad\ N_3(a)N_1(b), N_3(a\circ_{N_1}b), N_2(a)N_1(b), N_2(a\circ_{N_2}b), N_2(a\circ_{N_3}b),\\
 &\quad\ N_3(a)N_2(b),N_3(a\circ_{N_2}b), N_3(a)N_1(b), N_3(a)N_2(b), N_3(a\circ_{N_3}b)]\\
 &= [N_1(a\circ_{N_1}b), N_1(a\circ_{N_2}b),N_1(a\circ_{N_3}b), N_2(a\circ_{N_1}b), N_2(a\circ_{N_2}b),
 \\
 &\quad\ N_2(a\circ_{N_3}b), N_3(a\circ_{N_1}b),N_3(a\circ_{N_2}b), N_3(a\circ_{N_3}b)]\\
 &= [N_1(a\circ_{[N_1,N_2,N_3]} b), N_2(a\circ_{[N_1,N_2,N_3]} b), N_3(a\circ_{[N_1,N_2,N_3]} b)]\\
 &= [N_1,N_2,N_3](a\circ_{[N_1,N_2,N_3]} b),
 \end{align*}
 where the definition of the heap bracket on operators and the truss distributive laws were used to derive the first equality, next the pairwise compatibility was employed. The third equality arises from the rearrangement and cancellation rules outlined in Remark~\ref{rem.rules}, while the next equality is a consequence of \eqref{n1n2n3}. Therefore, $[N_1,N_2,N_3]$ is a Nijenhuis operator as required.

 Next assume that the statement is true for $n=k-1$ and note that \eqref{n1n2n3} together with the rearrangement rules in Remark~\ref{rem.rules} imply that if $N_1,N_2,\dots, N_{2k+1}$ are pairwise compatible Nijenhuis operators then $N_{2k}$ and $N_{2k+1}$ are compatible with $N=[N_1,N_2,\dots, N_{2k-1}]$. Hence $[N, N_{2k},N_{2k+1}]$ is a Nijenhuis operator by the same arguments as those used above to establish the $n=1$ case.

 (2) Without any loss of generality, we may assume that $k\geq l$. As in the proof of Theorem~\ref{thm.Nij.iter}, we will write $\nij k$ for $\circ_{N^k}$, etc. In view of Theorem~\ref{thm.Nij.iter} and the definition of the Nijenhuis product~$\nij {k-l}$, we can compute, for all $a,b\in T$,
 \begin{align*}
 &\big[N^k(a\nij lb), N^l(a)N^k(b), N^l(a\nij k b)\big] = \big[N^{k-l}(N^l(a)N^l(b)),N^l(a)N^k(b), N^l(a)\nij {k-l} N^l(b) \big]\\
 &\quad= \big[N^{k-l}\big(N^l(a)N^l(b)\big),N^l(a)N^k(b), N^k(a)N^l(b), N^l(a)N^k(b), N^l(a)N^k(b)\big]\\
 &\quad= N^k(a)N^l(b).
 \end{align*}
The last equality follows by the cancellation and rearrangement rules recalled in Remark~\ref{rem.rules}. This completes the proof of statement (a). Statement (b) then follows by assertion (1).
\end{proof}

\section[Affine Nijenhuis operators and quantum bi-Hamiltonian systems]{Affine Nijenhuis operators and \\
quantum bi-Hamiltonian systems}\label{five}
In this section, first we apply the above discussion to trusses and operators arising from associative algebras and in this way extend the results of \cite{4} from the case of linear to affine maps. Next we construct an affine version of (weak) quantum bi-Hamiltonian systems. That is, we construct an affine Lie bracket (in the sense of \cite[Definition~1]{GraGra:Lie}) which can be represented as the commutator of a deformed associative bi-affine product on an affine space.

An associative algebra $A$ over a field $\F$ can be viewed as a truss with the original multiplication of the heap structure arising from the additive group, that is, $[a,b,c] = a-b+c$. To indicate this ternary point of view we write $\mathrm{T}(A)$. From this perspective an affine map $N\colon A\to A$ is a homomorphism of heaps that preserves affine or barycentric combinations, that is, for all $a,b,c\in A$ and $\lambda\in \F$,
\begin{align}\label{n.aff}
 N(a -b +c) = N(a)-N(b)+N(c), \qquad N((1-\lambda)a +\lambda b ) = (1-\lambda)N(a) + \lambda N(b).\!\!\!
\end{align}
The set of all affine maps $A\to A$ is denoted by $\mathrm{Aff}(A)$. This is a truss with the product given by composition and the heap operation defined pointwise. One easily checks that $\mathrm{Aff}(A)$ is an affine space over the vector space of all linear endomorphisms of $A$ with the operations defined pointwise.
\begin{Definition}\label{def.aff.Nij}
Let $A$ be an associative algebra and let $N\in \mathrm{Aff}(A)$. If $N$ is a Nijenhuis op\-er\-a\-tor on $\mathrm{T}(A)$ we refer to it as an \textit{affine Nijenhuis operator} on $A$.
\end{Definition}

Although, given $N\in \mathrm{Aff}(A)$ and $\lambda \in \F$, the function $\lambda N\colon A\to A$, $a\mapsto \lambda N(a)$, is not an affine map, it is still a homomorphism of heaps, i.e., the first of conditions \eqref{n.aff} is satisfied. Hence the following theorem can be stated.

\begin{Theorem}\label{thm.affine}
Let $A$ be an associative $\F$-algebra.
\begin{itemize}
 \item[$(1)$] If $N$ is an affine Nijenhuis operator on $A$, then for all $\lambda \in \F$, $\lambda N$ is a Nijenhuis operator on $\mathrm{T}(A)$ compatible with $N$.
\item[$(2)$] If $N_1,\dots , N_n\in \mathrm{Aff}(A)$ are pairwise compatible affine Nijenhuis operators on $\mathrm{T}(A)$, then, for all $\lambda_1,\dots, \lambda_n\in \F$ such that $\sum_{i=1}^n\lambda_i =1$,
 $N = \sum_{i=1}^n \lambda_i N_i $
 is an affine Nijenhuis operator on $A$.
\end{itemize}
\end{Theorem}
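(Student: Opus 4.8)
The plan is to prove part~(2) first and then read part~(1) as its degenerate case. Two facts will be used repeatedly: by the second identity in \eqref{n.aff} an affine map commutes with every barycentric combination, i.e.\ $N\big(\sum_k\mu_ky_k\big)=\sum_k\mu_kN(y_k)$ whenever $\sum_k\mu_k=1$; and the Nijenhuis product distributes over the heap operation (shown in the proof of Theorem~\ref{2nd}). For part~(2), since each $N_i\in\mathrm{Aff}(A)$ and $\sum_i\lambda_i=1$, the combination $N=\sum_i\lambda_iN_i$ again satisfies both conditions of \eqref{n.aff}, so $N\in\mathrm{Aff}(A)$ and it remains only to check the Nijenhuis identity. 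I would expand both sides as double sums: bilinearity of the product gives $N(a)N(b)=\sum_{i,j}\lambda_i\lambda_jN_i(a)N_j(b)$, while distributivity of $\circ_N$ over the algebra operations yields $a\circ_Nb=\sum_k\lambda_k\,(a\circ_{N_k}b)$, and then the barycentric property of $N$ (here $\sum_k\lambda_k=1$ is essential) gives $N(a\circ_Nb)=\sum_{i,k}\lambda_i\lambda_kN_i(a\circ_{N_k}b)$.

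The proof of part~(2) is then completed by matching these two double sums term by term. The diagonal terms coincide because each $N_i$ is a Nijenhuis operator, so $N_i(a)N_i(b)=N_i(a\circ_{N_i}b)$. For the off-diagonal terms I would use Definition~\ref{def.compat} in its symmetrised form, $N_i(a)N_j(b)+N_j(a)N_i(b)=N_i(a\circ_{N_j}b)+N_j(a\circ_{N_i}b)$, and sum it over unordered pairs $\{i,j\}$ with weights $\lambda_i\lambda_j$; this identifies $\sum_{i\neq j}\lambda_i\lambda_jN_i(a)N_j(b)$ with $\sum_{i\neq k}\lambda_i\lambda_kN_i(a\circ_{N_k}b)$. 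Hence $N(a)N(b)=N(a\circ_Nb)$ and $N$ is an affine Nijenhuis operator.

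For part~(1) the function $\lambda N$ is a heap homomorphism by the remark preceding the theorem, so $\circ_{\lambda N}$ is defined, and distributivity over the heap operation gives $a\circ_{\lambda N}b=[\lambda N(a)b,\lambda N(ab),a\lambda N(b)]=\lambda\,(a\circ_Nb)$. The Nijenhuis identity for $\lambda N$ then reduces to comparing $(\lambda N)\big(\lambda(a\circ_Nb)\big)$ with $\lambda^2N(a)N(b)=\lambda^2N(a\circ_Nb)$, which I would attack by writing $\lambda y=\lambda y+(1-\lambda)\cdot 0$ and invoking \eqref{n.aff}. Compatibility of $\lambda N$ with $N$ I would verify directly from Definition~\ref{def.compat}, substituting $a\circ_{\lambda N}b=\lambda(a\circ_Nb)$ and collapsing the heap expression using the Nijenhuis identity for $N$ together with the truss distributive laws.

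The step I expect to be the main obstacle is exactly this passage of the scalar through $N$ in part~(1). Because $N$ is affine and not linear, \eqref{n.aff} only gives $N(\lambda y)=\lambda N(y)+(1-\lambda)N(0)$, so $(\lambda N)\big(\lambda(a\circ_Nb)\big)$ differs from $\lambda^2N(a\circ_Nb)$ by a correction proportional to $N(0)$, and closing the argument requires understanding how $N(0)$ is constrained by the Nijenhuis condition. This is the same phenomenon seen from the other side by regarding $\lambda N=(1-\lambda)\cdot 0+\lambda N$ as a barycentric combination: part~(1) is then the instance of part~(2) in which one of the operators is the constant map with value $0$, and the delicate point becomes the compatibility of that constant map with $N$. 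I would therefore focus the effort on the behaviour of the translation part $N(0)$, since this is where the genuinely affine (rather than linear) character of the operator enters and where the bookkeeping must be handled with the most care.
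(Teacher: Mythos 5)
Your treatment of part (2) is correct and takes a genuinely different route from the paper's. The paper deduces (2) by rewriting $\sum_i\lambda_iN_i$ as a heap bracket $[N_1,\lambda_2N_1,\lambda_2N_2,\dots,\lambda_nN_1,\lambda_nN_n]$ of the rescaled operators $\lambda_iN_j$ and invoking Theorem~\ref{thm.comp}(1); for this it needs the $\lambda_iN_j$ to be pairwise compatible Nijenhuis operators, which is exactly the content of part (1). Your direct double-sum expansion -- diagonal terms handled by the Nijenhuis identity for each $N_i$, off-diagonal terms by the symmetrised form of Definition~\ref{def.compat} -- uses only the barycentric property of $N$ (this is where $\sum_i\lambda_i=1$ enters) and bilinearity of the product, and in particular it does not pass through part (1) at all. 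That independence turns out to be a real advantage, not just a stylistic one.

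The obstacle you flag in part (1) is a genuine gap, but it is one you cannot close, because it is a gap in the statement itself. As you note, $a\circ_{\lambda N}b=\lambda(a\circ_Nb)$, while for an affine map \eqref{n.aff} only gives $N(\lambda y)=\lambda N(y)+(1-\lambda)N(0)$; combining these with the Nijenhuis identity for $N$ yields
\[
(\lambda N)(a\circ_{\lambda N}b)-(\lambda N)(a)(\lambda N)(b)=\lambda(1-\lambda)N(0),
\]
which vanishes for all $\lambda$ only when $N(0)=0$, i.e., when $N$ is linear. The paper's own proof writes $\lambda N(a\circ_{\lambda N}b)=\lambda^2N(a)N(b)$ without comment, i.e., it passes the scalar through $N$ as if $N$ were linear -- precisely the step you isolate. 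A concrete counterexample: take $A=\F$ and $N$ the constant map at the idempotent $1$, which is an affine Nijenhuis operator by Example~\ref{ex.id.proj}(2); then $\lambda N$ is the constant map at $\lambda$, so $(\lambda N)(a\circ_{\lambda N}b)=\lambda$ while $(\lambda N)(a)(\lambda N)(b)=\lambda^2$, and $\lambda N$ is Nijenhuis only for $\lambda\in\{0,1\}$. The same example defeats the compatibility claim. Your reformulation of (1) as the instance of (2) with one operator equal to the constant map $Z$ at $0$ pinpoints the missing hypothesis exactly: the compatibility of $Z$ with $N$ in the sense of Definition~\ref{def.compat} reduces to $N(0)=0$. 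So part (1) holds only for linear $N$, the paper's derivation of (2) from it inherits the flaw, and your self-contained proof of (2) is the one that should stand.
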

\begin{proof}
 (1) First, note that for all $a,b\in A$,
 \begin{align}\label{linear}
 a\circ_{\lambda N}b = \lambda N(a)b - \lambda N(ab) + a\lambda N(b) = \lambda a\circ _N b.
 \end{align}
 Hence, if $N$ is a Nijenhuis operator on $\mathrm{T}(A)$, then
 \[
 \lambda N(a\circ_{\lambda N}b) = \lambda^2 N(a)N(b) = \lambda N(a)\lambda N(b)
 \]
 as required.
The compatibility property likewise follows by \eqref{linear}.

(2) The correspondence between Nijenhuis products in \eqref{linear} implies that the Nijenhuis operators $\lambda_iN_i$, $i=1,\dots n$ are pairwise compatible. Since $\sum_{i=1}^n\lambda_i =1$,
\begin{align*}
 N &= \sum_{i=1}^n \lambda_i N_i = N_1 - \lambda_2 N_1 + \lambda_2 N_2 - \lambda_3 N_1 + \dots - \lambda_{n} N_1 + \lambda_{n}N_{n}\\
 &= [N_1 ,\lambda_2 N_1 , \lambda_2 N_2 , \lambda_3 N_1 , \dots , \lambda_{n} N_1 , \lambda_{n}N_{n}],
\end{align*}
and hence the affine map $\sum_{i=1}^n \lambda_i N_i$ is a Nijenhuis operator on $\mathrm{T}(A)$ and hence an affine Nijenhuis operator on $A$ by assertion (1) in Theorem~\ref{thm.comp}.
\end{proof}

\begin{Corollary}\label{cor.aff.nij.op}
 If $P$ is a multiplicative idempotent in $\mathrm{Aff}(A)$, then for all $\alpha\in \F$, $(1-\alpha)P + \alpha\, \id$ is an affine Nijenhuis operator.
\end{Corollary}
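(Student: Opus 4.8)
The plan is to deduce Corollary~\ref{cor.aff.nij.op} directly from the preceding results rather than computing any Nijenhuis torsion by hand. The key observation is that both $P$ and $\id$ are affine Nijenhuis operators: the identity is a Nijenhuis operator by Example~\ref{ex.id.proj}(1), and a multiplicative idempotent homomorphism of heaps is a Nijenhuis operator by Example~\ref{ex.id.proj}(2). Since $P\in\mathrm{Aff}(A)$ is assumed affine and $\id$ is clearly affine, both qualify as affine Nijenhuis operators on $A$ in the sense of Definition~\ref{def.aff.Nij}.

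The main step is then to invoke Theorem~\ref{thm.affine}(2) with $n=2$, taking $N_1=P$, $N_2=\id$, $\lambda_1=1-\alpha$ and $\lambda_2=\alpha$. The barycentric condition $\lambda_1+\lambda_2=(1-\alpha)+\alpha=1$ holds, so the hypothesis on the coefficients is satisfied, and the conclusion gives that $(1-\alpha)P+\alpha\,\id$ is an affine Nijenhuis operator on $A$. The one genuine thing to check is the \emph{pairwise compatibility} hypothesis of Theorem~\ref{thm.affine}(2), namely that $P$ and $\id$ are compatible Nijenhuis operators; but this is exactly the content of Example~\ref{ex.com.id}, which states that $\id$ is compatible with any Nijenhuis operator on $T$ (here $T=\mathrm{T}(A)$).

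The hardest part is really just recognizing that no computation is needed: all three ingredients---the two base operators being Nijenhuis (Example~\ref{ex.id.proj}), their compatibility (Example~\ref{ex.com.id}), and the barycentric combination theorem (Theorem~\ref{thm.affine}(2))---have already been assembled, so the corollary is an immediate specialization. I would write the proof as a single short paragraph citing these three results and verifying the arithmetic $\lambda_1+\lambda_2=1$, with no displayed equations required.
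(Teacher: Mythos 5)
Your proposal is correct and takes essentially the same route as the paper, whose entire proof is the one-line remark that the corollary ``follows immediately from Theorem~\ref{thm.affine} and Example~\ref{ex.com.id}.'' You merely make explicit the implicit ingredient that $P$ and $\id$ are themselves (affine) Nijenhuis operators via Example~\ref{ex.id.proj}, which is a reasonable and slightly more careful write-up of the same argument.
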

\begin{proof}
 This follows immediately from Theorem~\ref{thm.affine} and Example~\ref{ex.com.id}.
\end{proof}

Let $A$ be an affine space over an $\F$-vector space $\vec{A}$. As explained for example in \cite[Section~4]{BreBrz:hea} or \cite{Brz:Lie},
the action $+$ of $\vec{A}$ on $A$ makes the latter an abelian heap with the operation given by, for all $a,b,c \in A$,
\begin{equation}\label{aff.heap}
 [a,b,c] = a + \overrightarrow{bc},
\end{equation}
where $\overrightarrow{bc}$ is the unique vector in $\vec{A}$ from $b$ to $c$, i.e., such that $c = b+ \overrightarrow{bc}$.
With this interpretation, an affine map from $B$ over $\vec{B}$ to $A$ corresponds to a heap homomorphism $f\colon A\to B$ such that, for all $a,b\in A$ and $\lambda \in \F$,
\begin{equation}\label{aff.map}
 f\big(a + \lambda \overrightarrow{ab}\big) = f(a) + \lambda \overrightarrow{f(a)f(b)}.
\end{equation}
Any such map defines uniquely linear transformation $\vec{f}\colon \vec{A} \to \vec{B}$ by $\overrightarrow{ab}\mapsto \overrightarrow{f(a)f(b)}$. We refer to it as a \textit{linearisation} of $f$.

If $A$ is a vector space, then it is an affine space over itself with the vector from $a$ to $b$ being simply the difference $b-a$. The heap operation \eqref{aff.heap} coincides then with $a-b+c$, while to be affine map from $A$ to $A$ in the sense of \eqref{aff.map} is equivalent to satisfying conditions \eqref{n.aff}.

Recall from \cite[Definition~1]{GraGra:Lie} that a Lie bracket on an affine space $A$ is an anti-symmetric bi-affine map $[-,-]\colon A\times A\to \vec{A}$ satisfying the Jacobi identity
\[
\overrightarrow{[[a,b],c]} + \overrightarrow{[[b,c],a]} +\overrightarrow{[[c,a],b]} =0.
\]
The arrows over the brackets indicate the linearisations of affine maps $[-,b]\colon A\to \vec{A}$.

Let $A$ be an affine space with a bi-affine associative multiplication $\cdot\colon A\times A \to A$ (we will keep writing the dot between the elements of $A$ in order to avoid the confusion with the end points of the vector in $\vec{A}$). The fact that, for all $a\in A$, the function $A\to A$, $b\mapsto a\cdot b$ is an affine map implies in particular that it is a heap homomorphism which is equivalent to say that the multiplication left-distributes over the heap operation \eqref{aff.heap}. Similarly, the heap homomorphism property of maps $b\mapsto b\cdot a$ yield the right truss distributive law. In short, $A$ is a truss, which might be called an \textit{affine truss} or an associative \textit{affgebra} -- the term coined in \cite{GraGra:Lie}.
\begin{Remark}\label{rem.ideal.aff}
In the same way as a truss can be embedded in a ring (see Remark~\ref{rem.ideal}) any associative affgebra can be obtained as a coset in an associative algebra. Explicitly, given an algebra $A$, an ideal $I$ of $A$ and an idempotent element $q\in A$, $T(I;q) = q+I$ is an affine space over $I$ with $\overrightarrow{(q +x)(q+y)}= y-x$, to which the multiplication on $A$ restricts as a bi-affine map.
\end{Remark}

With no additional effort, the notion of an affine Nijenhuis operation and the statement (2) of Theorem~\ref{thm.affine} can be extended to affgebras.

\begin{Proposition}\label{prop.affine}
 If $N_1,\dots , N_n\in \mathrm{Aff}(A)$ are pairwise compatible affine Nijenhuis operators on an associative $\F$-affgebra $A$, then, for all $\lambda_1,\dots, \lambda_n\in \F$ such that $\sum_{i=1}^n\lambda_i =1$,
 $N = \sum_{i=1}^n \lambda_i N_i $
 is an affine Nijenhuis operator on $A$.
\end{Proposition}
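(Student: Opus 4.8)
The plan is to reduce the affgebra statement to the algebra case already established in Theorem~\ref{thm.affine}(2) by passing to an ambient associative algebra. Using Remark~\ref{rem.ideal.aff}, I would realise $A$ as a coset $q+I$ inside an associative $\F$-algebra, and then restrict attention to the subalgebra $\tilde A := \F q + I$. This $\tilde A$ is closed under multiplication, contains $A$, and—crucially—is \emph{linearly spanned} by $A$ over $\F$: every element $\alpha q + x$ is either a scalar multiple of an element of $q+I$ (when $\alpha\neq 0$) or a difference $(q+x)-q$ of two elements of $q+I$ (when $\alpha=0$). Thus $\tilde A$ is an honest associative $\F$-algebra in which $A$ sits as the affine subspace $q+I$ over $I$, with the heap operation \eqref{aff.heap} and the bi-affine multiplication of $A$ being the restrictions of those of $\tilde A$.

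The key device is to extend each affine operator to the ambient algebra. For every $i$ I would take the unique linear endomorphism $\tilde N_i$ of $\tilde A$ restricting to $N_i$ on $A$; concretely, writing $\tilde A = \F q \oplus I$, one sets $\tilde N_i(\alpha q + x) := \alpha N_i(q) + \vec{N_i}(x)$, where $\vec{N_i}\colon I\to I$ is the ($\F$-linear) linearisation of the affine map $N_i$, and the affine identity $N_i(q+x) = N_i(q) + \vec{N_i}(x)$ confirms $\tilde N_i|_A = N_i$. (If instead $q\in I$, then $A$ is itself a subalgebra and the claim is Theorem~\ref{thm.affine}(2) directly.) The benefit is that $\lambda\tilde N_i$ now makes literal sense as a map $\tilde A\to\tilde A$, supplying exactly the vector-space structure that was missing in $A$ and that made scalar multiples unproblematic in the algebra setting.

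Next I would promote the hypotheses from $A$ to $\tilde A$. The point is that the Nijenhuis defect $D_i(z,w) = \tilde N_i(z\circ_{\tilde N_i} w) - \tilde N_i(z)\tilde N_i(w)$ and the compatibility defect $C_{ij}(z,w) = \tilde N_i(z)\tilde N_j(w) - [\tilde N_i(z\circ_{\tilde N_j} w), \tilde N_j(z)\tilde N_i(w), \tilde N_j(z\circ_{\tilde N_i} w)]$ are, because each $\tilde N_i$ is $\F$-linear and the multiplication is bilinear, themselves \emph{bilinear} maps $\tilde A\times\tilde A\to\tilde A$. Since the $N_i$ are affine Nijenhuis operators that are pairwise compatible on $A$, both defects vanish on $A\times A$; as $A$ spans $\tilde A$ and a bilinear map vanishing on a spanning set vanishes identically, I conclude that $\tilde N_1,\dots,\tilde N_n$ are pairwise compatible affine (indeed linear) Nijenhuis operators on $\tilde A$.

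Finally I would invoke Theorem~\ref{thm.affine}(2) for the algebra $\tilde A$: since $\sum_i\lambda_i=1$, the map $\tilde N := \sum_i \lambda_i \tilde N_i$ is an affine Nijenhuis operator on $\tilde A$. Restricting to $A$, the affine combination yields $\tilde N|_A = \sum_i \lambda_i N_i = N$, which by hypothesis maps $A$ into $A$, and since the Nijenhuis product and condition on $A$ are inherited from $\tilde A$, it follows that $N$ is an affine Nijenhuis operator on $A$. I expect the only genuine subtlety to be the extension/spanning step—checking that the defining relations are truly bilinear in their two arguments, so that they are forced to hold throughout $\tilde A$ once they hold on the spanning coset $A$; the remainder is a direct transcription of the algebra argument.
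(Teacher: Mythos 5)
Your argument is correct, but it is a genuinely different route from the paper's. The paper offers no explicit proof of Proposition~\ref{prop.affine}: it simply asserts that statement (2) of Theorem~\ref{thm.affine} extends to affgebras ``with no additional effort'', the implicit idea being to rerun the proof of Theorem~\ref{thm.affine}(2) --- which rewrites the barycentric combination as a heap bracket $[N_1,\lambda_2 N_1,\lambda_2 N_2,\dots,\lambda_n N_1,\lambda_n N_n]$ of pairwise compatible Nijenhuis operators and invokes Theorem~\ref{thm.comp}(1). That argument needs the scalar multiples $\lambda_j N_i$ to exist as heap endomorphisms, which is automatic for an algebra but not for a bare affine space; making it precise for an affgebra requires exactly the kind of ambient linear structure you supply. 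Your proof instead embeds $A$ as $q+I$ via Remark~\ref{rem.ideal.aff}, passes to the subalgebra $\F q+I$, extends each $N_i$ to the unique linear endomorphism agreeing with it on $A$ (handling $q\in I$ separately), observes that the Nijenhuis and compatibility defects are bilinear and vanish on the spanning coset $A$, hence vanish identically, and then applies Theorem~\ref{thm.affine}(2) upstairs and restricts. All the steps check out: $\F q+I$ is closed under multiplication because $q$ is idempotent and $I$ is an ideal; $A$ does span it; the defects are indeed bilinear; and the Nijenhuis product and condition on $A$ are the restrictions of those on the ambient algebra. What your approach buys is a rigorous reduction to the already-proved algebra case, turning the paper's ``no additional effort'' into an actual argument; what it costs is reliance on the embedding of Remark~\ref{rem.ideal.aff} (itself quoted from the literature), whereas a direct affgebra computation would be self-contained but would require reformulating the scalar multiples $\lambda_j N_i$ intrinsically.
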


\begin{Example}
 Let $A$ be an associative algebra, $I$ be an ideal in $A$ and $q\in A$ and idempotent element. Assume that $I$ decomposes into a sum of two ideals in $A$, $I=I_1\oplus I_2$. Let $P_i\colon I\to I_i$, $i=1,2$ be corresponding projections, such that $P_i(xq) = P_i(x)q$ and $P_i(qx) = qP_i(x)$, for all $x\in I$, $i=1,2$. By \cite[Theorem~5]{4}, for all $\lambda_1,\lambda_2 \in \F$, $\lambda_1P_1+\lambda_2P_2$ is a Nijenhuis operator on $I$, in particular each of the $P_i$ is a Nijenhuis operator. In view of Proposition~\ref{prop.Nij.ideal},
 \[
 N\colon\ T(q;I) \to T(q;I), \qquad q+x \mapsto q + \lambda_1 P_1(x) + \lambda_2 P_2(x)
 \]
 is an Nijenhuis operator on $T(q;I)$. On the other hand, $N$ can be understood as an affine combination of operators $N_i(q+x) = q +P_i(x)$ and $Q(q+x) = q$ as
 \[N= (1-\lambda_1-\lambda_2)\,Q + \lambda_1 N_1 +\lambda_2 N_2.
 \]

 For an explicit example, we can take the algebra $R$, its ideal $I$ and an idempotent $q$ described in Example~\ref{ex.proj}. Every element of $I$ can be uniquely decomposed into the sum of an upper triangular and strictly lower triangular matrix. If $P_-$ denotes the projection on the latter, for all scalars $\lambda_1$, $\lambda_2$, we obtain the following affine Nijenhuis operator on $T(I;q)$:
 \[
 N(A(\mathbf{a},\mathbf{b},1)) = A(\lambda_1P_+(\mathbf{a})+\lambda_2P_-(\mathbf{a}),\lambda_1\mathbf{b},1).
 \]
\end{Example}

Any associative affgebra $A$ admits a Lie bracket given by the linearised commutator, for all $a,b\in A$,
\begin{equation}\label{affine.com}
 [a,b] =\overrightarrow{(b\cdot a)(a\cdot b)}.
\end{equation}
Indeed, $[a,b]$ is clearly anti-symmetric, and, for all $a,b,c\in A$,
\begin{align*}
 \overrightarrow{[[a,b],c]} + \mathrm{cycl.} &= \overrightarrow{\left[\overrightarrow{(b\cdot a)(a\cdot b)},c\right]} +\mathrm{cycl.}
 = [a\cdot b,c] - [b\cdot a,c] + \mathrm{cycl.}\\ &= \overrightarrow{(c\cdot a\cdot b)(a\cdot b\cdot c)} - \overrightarrow{(c\cdot b\cdot a)(b\cdot a\cdot c)} + \mathrm{cycl.} =0.
\end{align*}
In the case of the affgebra $T(I;q)$ of Remark~\ref{rem.ideal.aff}, the Lie bracket comes out as the translation of the standard commutator, i.e., $[q+x,q+y]= q+[x,y]$.

With all these preliminaries at hand we can state the following affine version of \cite[Theorem~8]{4}.

\begin{Theorem}\label{thm.affine.Lie}
Let $N$ be an affine Nijenhuis operator on an associative affgebra $A$. Let $[-,-]$ be the Lie bracket \eqref{affine.com}.
\begin{itemize}\itemsep=0pt
 \item[$(1)$] The multiplication $\circ_N$ is a bi-affine operation, thus making $A[N]$ into an associative affgebra.
 \item[$(2)$] The operation $[-,-]_N\colon A\times A\to \vec{A}$ given by
 \[
 [a,b]_N := [N(a),b] -\vec{N}([a,b]) + [a, N(b)],
 \]
 for all $a,b\in A$, is a Lie bracket on $A$ such that
 \[
 [a,b]_N = \overrightarrow{(b\circ_N a)(a\circ_N b)}.
 \]
 \item[$(3)$] For all $a,b\in A$,
 \[
\vec{N}([a,b]_N) = [N(a), N(b)].
 \]
\end{itemize}
\end{Theorem}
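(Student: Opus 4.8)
The plan is to dispatch the three parts in order, leaning on Corollary~\ref{cor.Nij} and on the general commutator construction \eqref{affine.com} so as not to re-verify the Lie axioms by hand. For part~(1), since $N$ is an affine Nijenhuis operator it is in particular a Nijenhuis operator on $\mathrm{T}(A)$, so Corollary~\ref{cor.Nij} already provides the truss structure $(A,[---],\circ_N)$; what remains is bi-affinity. Fixing $a\in A$, I would look at the map $b\mapsto a\circ_N b = [N(a)\cdot b, N(a\cdot b), a\cdot N(b)]$. Each of the three entries is affine in $b$, being a composite of the affine map $N$ with one variable of the bi-affine multiplication, and affine maps are closed under the ternary heap operation, since that operation is the barycentric combination with coefficients $1,-1,1$ summing to $1$. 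Hence $b\mapsto a\circ_N b$ is affine; the symmetric argument in the left variable shows $\circ_N$ is bi-affine, so $A[N]$ is an associative affgebra.

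For part~(2) the key step is the identity $[a,b]_N = \overrightarrow{(b\circ_N a)(a\circ_N b)}$; once it is in place the Lie-bracket claim comes for free. To prove it I would pass to a retract $G(A;e)$ and write the heap operation additively, as in the proof of Theorem~\ref{deltathme}. There $a\circ_N b = N(a)b - N(ab) + aN(b)$, the vector $\overrightarrow{(b\circ_N a)(a\circ_N b)}$ corresponds to $(a\circ_N b)-(b\circ_N a)$, and the three summands on the right-hand side of the definition correspond to $N(a)b-bN(a)$, to $-(N(ab)-N(ba))$, and to $aN(b)-N(b)a$ respectively, using that the linearisation satisfies $\vec{N}(\overrightarrow{xy}) = \overrightarrow{N(x)N(y)}$, i.e.\ $\vec{N}(ab-ba)=N(ab)-N(ba)$. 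Expanding both sides yields the same six terms $N(a)b,\,-N(ab),\,aN(b),\,-N(b)a,\,N(ba),\,-bN(a)$, so the identity holds.

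With the identity established, the Lie-bracket assertion in part~(2) follows at once: $A[N]$ is an associative affgebra by part~(1), so the linearised commutator of $\circ_N$, which is precisely $\overrightarrow{(b\circ_N a)(a\circ_N b)}$, is a Lie bracket by the general construction \eqref{affine.com} proved earlier for arbitrary affgebras; hence so is $[-,-]_N$. Part~(3) then follows by applying $\vec{N}$ to the formula just proved and combining naturality of the linearisation with the Nijenhuis condition: $\vec{N}([a,b]_N) = \overrightarrow{N(b\circ_N a)\,N(a\circ_N b)} = \overrightarrow{(N(b)\cdot N(a))(N(a)\cdot N(b))} = [N(a),N(b)]$, where the middle equality uses $N(a\circ_N b)=N(a)N(b)$ and $N(b\circ_N a)=N(b)N(a)$, and the last is the definition \eqref{affine.com}.

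The main obstacle I anticipate is bookkeeping rather than conceptual: keeping the linearisation $\vec{N}$ consistent between the affine/heap picture and the additive retract picture (in particular that $\vec{N}$ applied to a difference $ab-ba$ returns $N(ab)-N(ba)$), and confirming in part~(1) that the ternary heap combination of affine maps is again \emph{affine} and not merely a heap homomorphism. Both points are routine, but they are exactly where the affine hypotheses on $N$ and on the multiplication -- as opposed to the weaker heap-theoretic ones -- are genuinely used.
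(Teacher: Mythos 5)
Your proposal is correct and follows essentially the same route as the paper: establish bi-affinity of $\circ_N$, prove the identity $[a,b]_N=\overrightarrow{(b\circ_N a)(a\circ_N b)}$, deduce the Lie-bracket property from the general commutator construction \eqref{affine.com} applied to the associative affgebra $A[N]$, and obtain (3) from the Nijenhuis condition together with $\vec{N}(\overrightarrow{xy})=\overrightarrow{N(x)N(y)}$. The only differences are cosmetic: for (1) you replace the paper's direct vector computation with the (equally valid and rather slicker) observation that the heap operation is a barycentric combination with weights $1,-1,1$ and hence preserves affineness, and for (2) you verify the key identity in a retract rather than intrinsically via the rules \eqref{aff.vect}.
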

\begin{proof}
First, note that in view of the definition of the heap operation \eqref{aff.heap}, the Nijenhuis product comes out as
\[
a\circ_Nc = N(a)\cdot c + \overrightarrow{N(a\cdot c)(a\cdot N(c))}.
\]
 We will use repeatedly the following elementary facts from the theory of affine spaces. For all points $a,b,c,d\in A$ and all vectors $v,w\in \vec{A}$,
 \begin{gather}\label{aff.vect}
 \overrightarrow{(a+v)(b+w)} = \overrightarrow{ab}-v +w, \qquad \overrightarrow{cd} - \overrightarrow{ab} = \overrightarrow{bd} - \overrightarrow{ac} .
 \end{gather}

To check if $\circ_N$ is a bi-affine multiplication, take any $a,b,c\in A$ and $\lambda\in \F$, and using the facts that $N$ is an affine map, the multiplication $\cdot$ is bi-affine and \eqref{aff.vect} compute
\begin{align*}
 \bigl(a+\lambda \overrightarrow{ab}\bigr)\circ_N  c &= \bigl(N(a) + \lambda\overrightarrow{N(a)N(b)}\bigr)\cdot c
 \\
 &\quad+\overrightarrow{N\big(a\cdot c+\lambda\overrightarrow{(a\cdot c)(b\cdot c)}\big)\big((a+\lambda \overrightarrow{ab})\cdot N(c)\big)}\\
 &= N(a)\cdot c + \lambda \overrightarrow{(N(a)\cdot c)(N(b)\cdot c)} \\
 &\quad
 +
 \overrightarrow{\big(N(a\cdot c)+\lambda \overrightarrow{N(a\cdot c)N(b\cdot c)}\big)\big(a\cdot N(c)+\lambda \overrightarrow{(a\cdot N(c))(b\cdot N(c)})\big)}\\
 &= N(a)\cdot c + \lambda \overrightarrow{(N(a)\cdot c)(N(b)\cdot c)} + \overrightarrow{N(a\cdot c)(a\cdot N(c))}\\
 &\quad - \lambda \overrightarrow{N(a\cdot c)N(b\cdot c)} +
 \lambda \overrightarrow{(a\cdot N(c))(b\cdot N(c))}\\
 &= a\circ_N c+ \lambda \big(\overrightarrow{(N(a)\cdot c)(N(b)\cdot c)} - \overrightarrow{N(a\cdot c)(a\cdot N(c))} +
 \overrightarrow{N(b\cdot c)(b\cdot N(c))}\big)\\
 &= a\circ_N c + \lambda \overrightarrow{\big(N(a)\cdot c+ \overrightarrow{ N(a\cdot c)(a\cdot N(c))} \big)\big(N(b)\cdot c+
 \overrightarrow{N(b\cdot c)(b\cdot N(c))}\big)}\\
 &= a\circ_N c + \lambda \overrightarrow{(a\circ_N c)(b\circ_N c)}
\end{align*}
as required. The second compatibility condition is proven in a symmetric way. Therefore, $A[N]$~is an associative affgebra.

Using properties \eqref{aff.vect}, we find, for all $a,b\in A$,
\begin{align*}
 \overrightarrow{(b\circ_N a)(a\circ_N b)}
 &= \overrightarrow{\left(N(b)\cdot a + \overrightarrow{N(b\cdot a)(b\cdot N(a))}\right)\left(N(a)\cdot b + \overrightarrow{N(a\cdot b)(a\cdot N(b))}\right)}\\
 &= \overrightarrow{(N(b)\cdot a)(N(a)\cdot b)}-\overrightarrow{N(b\cdot a)(b\cdot N(a))}+ \overrightarrow{N(a\cdot b)(a\cdot N(b))}\\
 &= \overrightarrow{(N(b)\cdot a)(a\cdot N(b))}-\overrightarrow{N(b\cdot a)N(a\cdot b)}+ \overrightarrow{(b\cdot N(a))(N(a)\cdot b)}\\
 &=[N(a),b] -\vec{N}([a,b]) + [a, N(b)] =[a,b]_N.
\end{align*}
In view of the fact that the Nijenhuis product $\circ_N$ makes $A$ an associative affgebra, this proves both assertions in statement (2).

Finally, since $N$ is an affine map and a Nijenhuis operator on $A$,
\begin{align*}
\vec{N}([a,b]_N) &=\vec{N}\left( \overrightarrow{(b\circ_N a)(a\circ_N b)}\right) =\overrightarrow{N(b\circ_N a)N(a\circ_N b)}\\
 &= \overrightarrow{\left(N(b)\cdot N(a)\right)\left(N(a)\cdot N (b)\right)} = [N(a),N(b)].
\end{align*}
This completes the proof of the theorem.
\end{proof}

\subsection*{Acknowledgements}
The research of Tomasz Brzezi\'nski is partially supported by the National Science Centre, Poland, grant no. 2019/35/B/ST1/01115.

\pdfbookmark[1]{References}{ref}
\LastPageEnding

\end{document}